\newtheorem{thm}{Theorem}[section]
\newtheorem{rmk}[thm]{Remark}
\newtheorem{emp}[thm]{Example}
\newtheorem{lem}[thm]{Lemma}
\newtheorem{defn}[thm]{Definition}
\newtheorem{prop}[thm]{Proposition}
\newtheorem{cor}[thm]{Corollary}
\newtheorem{noco}[thm]{Notation and Convention}
\begin{document}
\title{On iterated universal extensions and Nori's fundamental group of nilpotent bundles}
\author{Xiaodong Yi\footnote{yixd97@outlook.com}}
\date{}
\maketitle
\begin{abstract}
Let $k$ be a field of characteristic $0$, $X$ be a geometrically connected, smooth and proper variety over $k$ and $x\in X(k)$ be a base point. Using the notion of iterated universal extensions, we show that Nori's fundamental group $\pi_{1}^{N}(X,x)$ of nilpotent bundles is uniquely determined by the coherent cohomology groups $\mathrm{H}^{i}(X)=\mathrm{H}^{i}(X,\mathcal{O}_{X})$, $i=1,2$, and the cup product  $\cup: \mathrm{H}^{1}(X)\otimes\mathrm{H}^{1}(X) \rightarrow \mathrm{H}^{2}(X)$. This can be seen as an analogue of a classical fact on the de Rham fundamental group of compact Kähler manifolds. As a byproduct, we also determine low degree group cohomology of the trivial representation of $\pi_{1}^{N}(X,x)$, notably in degree $2$.
\end{abstract}
\noindent \textbf{Keyword:} Group scheme, Hopf algebra, Tannakian category \par
\noindent \textbf{Mathematics Subject Classification (MSC 2020):} 14F35, 14L15, 18M25, 20G05.\par
\section{Introduction}
Let $k$ be a field. For a  geometrically connected and proper variety over $k$, with a base point $x\in X(k)$, Nori introduces in \cite{nori1982fundamental} the category of nilpotent bundles, i.e, iterated extensions of $\mathcal{O}_{X}$ and proves that this category is neutral Tannakian. We use $\pi_{1}^{N}(X,x)$ to denote the corresponding Tannakian group scheme. \par
To motivate our results, we start with a ``digression'' about the de Rham fundamental group, whose formulation is of the same spirit as that of Nori's fundamental group. Topologically, for a connected complex manifold $X$ with a base point $x\in X$, the de Rham fundamental group $\pi_{1}^{dR}(X,x)$ is the pro-unipotent completion over $\mathbb{C}$ of the topological fundamental group $\pi_{1}^{top}(X,x)$. Algebraically, let $k$ be a field of characteristic $0$ and $X$ be a geometrically connected and smooth variety over $k$ with a base point $x\in X(k)$. The de Rham fundamental group $\pi_{1}^{dR}(X,x)$  is the Tannakian group scheme associated to the category of nilpotent integrable connections. Here nilpotent integrable connections are defined as iterated extensions of $(\mathcal{O}_{X},d)$. By Riemann-Hilbert correspondence, for a smooth connected complex variety, the topologically defined $\pi_{1}^{dR}(X^{an},x)$ is the same as the algebraically defined $\pi_{1}^{dR}(X,x)$.\par
 A remarkable fact (Corollary 2, Section 6, \cite{deligne1975real}) is that, for a compact Kähler manifold $X$, $\pi_{1}^{dR}(X,x)$ is uniquely determined by $\mathrm{H}_{dR}^{i}(X)=\mathrm{H}_{dR}^{i}(X,(\mathcal{O}_{X},d))$, $i=1,2$, and the cup product  \[\cup: \mathrm{H}_{dR}^{1}(X)\otimes\mathrm{H}_{dR}^{1}(X) \rightarrow \mathrm{H}_{dR}^{2}(X).\]
In fact, we can be more precise about the Hopf algebra $\mathcal{O}(\pi_{1}^{dR}(X,x))$: the Hopf algebra $\mathcal{O}(\pi_{1}^{dR}(X,x))$ can be obtained by applying a bar construction to the differential graded algebra $\Omega^{*}(X)$ of smooth forms, as showed in \cite{chen1977iterated}. The formality of compact Kähler manifolds says that $\Omega^{*}(X)$ is weakly equivalent to the differential graded algebra $\mathrm{H}^{*}_{dR}(X)$, so in the bar construction we can use $\mathrm{H}^{*}_{dR}(X)$ instead, leading to an explicit ``quadratic presentation'' of $\mathcal{O}(\pi_{1}^{dR}(X,x))$. We refer to \cite{amoros1996fundamental}\cite{gil2017multiple} for expositions of the de Rham theory.  \par
Our result says that a similar statement holds for Nori's fundamental group of nilpotent bundles. 
\begin{thm}[Theorem \ref{quadric}]
\label{intro_quadric}
Let $k$ be a field of characteristic $0$, $X$ be a geometrically connected, smooth and proper variety over $k$ and $x\in X(k)$ be a base point. Then Nori's fundamental group $\pi_{1}^{N}(X,x)$ is uniquely determined by the coherent cohomology groups $\mathrm{H}^{i}(X)=\mathrm{H}^{i}(X,\mathcal{O}_{X})$, $i=1,2$, and the cup product 
\[\cup: \mathrm{H}^{1}(X)\otimes\mathrm{H}^{1}(X) \rightarrow \mathrm{H}^{2}(X).\]
\end{thm}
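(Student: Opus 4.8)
The plan is to reduce to $k=\mathbb{C}$, to compute $\pi_{1}^{N}(X,x)$ through the tower of iterated universal extensions, and then to show that this tower --- equivalently, the minimal presentation of the pro-nilpotent Lie algebra $\mathfrak{g}=\mathrm{Lie}(\pi_{1}^{N}(X,x))$ --- involves only $\mathrm{H}^{1}(X)$, $\mathrm{H}^{2}(X)$ and the cup product. First I would make the reductions. Nori's fundamental group is compatible with extension of the base field, and coherent cohomology with its cup product is compatible with flat base change; so one may construct the expected comparison over $k$, assume $k$ algebraically closed, and, after descending $X$ and $x$ to a subfield finitely generated over $\mathbb{Q}$ and embedding it into $\mathbb{C}$, reduce the statement that the comparison is an isomorphism to the case $k=\mathbb{C}$, where Hodge theory is available. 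Since the category of nilpotent bundles consists of unipotent objects, $\pi_{1}^{N}(X,x)$ is pro-unipotent, hence, in characteristic $0$, determined by $\mathfrak{g}$; and $\mathfrak{g}$ is determined by a minimal presentation, with space of generators $\mathrm{Ext}^{1}_{\mathrm{nilp}}(\mathcal{O}_{X},\mathcal{O}_{X})^{\vee}=\mathrm{H}^{1}(X)^{\vee}$ (using that the category is closed under extensions) and space of relations a closed subspace $R\subseteq\widehat{\mathrm{FreeLie}}(\mathrm{H}^{1}(X)^{\vee})$.

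Next I would use the iterated universal extensions. By the results of the preceding sections, $\pi_{1}^{N}(X,x)$ is reconstructed from the pro-system $\{E_{n}\}$, where $E_{0}=\mathcal{O}_{X}$ and $E_{n+1}$ is the universal extension of $E_{n}$ by $\mathrm{Ext}^{1}(E_{n},\mathcal{O}_{X})^{\vee}\otimes\mathcal{O}_{X}$, together with the fibres at $x$ and their pro-group structure. D\'evissage along $0\to\mathrm{Ext}^{1}(E_{n},\mathcal{O}_{X})^{\vee}\otimes\mathcal{O}_{X}\to E_{n+1}\to E_{n}\to 0$, using that the universal extension class is the identity endomorphism, yields $\mathrm{Hom}(E_{n},\mathcal{O}_{X})=k$ for all $n$, vanishing of $\mathrm{Ext}^{1}(E_{n},\mathcal{O}_{X})\to\mathrm{Ext}^{1}(E_{n+1},\mathcal{O}_{X})$, and therefore
\[
\mathrm{Ext}^{1}(E_{n+1},\mathcal{O}_{X})=\ker\!\left(\mathrm{Ext}^{1}(E_{n},\mathcal{O}_{X})\otimes\mathrm{H}^{1}(X)\xrightarrow{\ \partial_{n}\ }\mathrm{Ext}^{2}(E_{n},\mathcal{O}_{X})\right),
\]
with $\partial_{n}$ given by Yoneda composition with the universal extension class of $E_{n+1}$. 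For $n=0$ the map $\partial_{0}$ is literally the cup product $\mathrm{H}^{1}(X)\otimes\mathrm{H}^{1}(X)\to\mathrm{H}^{2}(X)$, and inductively, unwinding the long exact $\mathrm{Ext}$-sequences of the displayed extensions, the connecting maps at each stage are built out of cup products with classes constructed at earlier stages. It then remains to show that, although these long exact sequences a priori involve $\mathrm{H}^{i}(X)$ for $i\ge 3$, the groups $\mathrm{Ext}^{1}(E_{n+1},\mathcal{O}_{X})$ and the fibre data in fact depend only on $\mathrm{H}^{1}(X)$, $\mathrm{H}^{2}(X)$ and $\cup$; concretely, that $R$ is the quadratic ideal generated by the image of the transpose of $\cup\colon\mathrm{H}^{1}(X)\otimes\mathrm{H}^{1}(X)\to\mathrm{H}^{2}(X)$ in $\Lambda^{2}\mathrm{H}^{1}(X)^{\vee}$.

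This last step is the crux, and the place where smoothness, properness and characteristic $0$ are genuinely used. It is the analogue of the fact that $\pi_{1}$ of a space depends only on its $2$-skeleton, and in the presence of non-zero cup products it amounts to the vanishing of the higher, Massey-type, secondary operations $\mathrm{Ext}^{1}_{\mathrm{nilp}}(\mathcal{O}_{X},\mathcal{O}_{X})^{\otimes m}\to\mathrm{Ext}^{2}_{\mathrm{nilp}}(\mathcal{O}_{X},\mathcal{O}_{X})$ for $m\ge 3$; equivalently, to formality of the suitably truncated coherent cochain algebra $R\Gamma(X,\mathcal{O}_{X})$, equivalently to the above presentation of $\mathfrak{g}$ being quadratic. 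I expect to deduce this from Hodge theory: over $\mathbb{C}$ one has $\mathrm{H}^{q}(X,\mathcal{O}_{X})=\mathrm{H}^{0,q}(X)$, the cup product respects the Hodge bigrading, and degeneration of the Hodge--de Rham spectral sequence --- equivalently the $\partial\bar{\partial}$-lemma, together with the strictness it provides --- forces any such secondary operation, which would necessarily carry the wrong Hodge bidegree, to vanish. This is precisely the mechanism of the Deligne--Griffiths--Morgan--Sullivan proof of formality of compact K\"ahler manifolds, applied here to the Dolbeault cochain algebra $(\mathcal{A}^{0,\bullet}_{X},\bar{\partial})$ computing $R\Gamma(X,\mathcal{O}_{X})$ --- that is, to the $\mathrm{gr}^{0}$ of the column-filtered de Rham algebra --- in place of the full de Rham algebra.

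Granting the formality input, the tower $\{E_{n}\}$ with the fibre functor it carries, hence $\pi_{1}^{N}(X,x)$, is determined up to canonical isomorphism by the triple $(\mathrm{H}^{1}(X),\mathrm{H}^{2}(X),\cup)$; explicitly, $\mathfrak{g}$ is $\widehat{\mathrm{FreeLie}}(\mathrm{H}^{1}(X)^{\vee})$ modulo the ideal generated by the image of the transpose of $\cup$. Since this description refers to $X$ only through that data, the comparison morphism is an isomorphism over $\mathbb{C}$, and hence, by faithfully flat base change and the compatibilities of the first paragraph, over an arbitrary field of characteristic $0$. The main obstacle is therefore concentrated in the third paragraph: turning the Hodge-theoretic degeneration into the required vanishing of the secondary operations that a priori couple $\pi_{1}^{N}$ to the higher coherent cohomology of $X$.
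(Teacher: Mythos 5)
Your architecture coincides with the paper's up to the decisive step: reduce to $k=\mathbb{C}$, build the iterated universal extension, and use the long exact sequences (with the connecting map on $\mathrm{Ext}^{1}$ being the identity) to identify $\mathrm{Ext}^{1}(E_{n+1},\mathcal{O}_{X})$ with the kernel of a cup-product map. This is precisely Lemma \ref{univ_ext}, the maps \eqref{induc}, Proposition \ref{mor_app} and Corollary \ref{non_qua}, which produce the canonical injections $\xi_{n}\colon \mathrm{Ext}^{1}(\mathcal{E}_{n},\mathcal{O}_{X})\hookrightarrow T_{n}$ and reduce everything to showing they are surjective. That surjectivity --- the statement that higher coherent cohomology and Massey-type operations do not intervene --- is exactly what you leave as an expectation, and it is where your proposal has a genuine gap. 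You propose to prove formality of the Dolbeault algebra $(\mathcal{A}^{0,\bullet}_{X},\bar\partial)$ directly. Even granting that formality (true for compact K\"ahler manifolds via a zig-zag through $\ker\partial$ and the $\partial\bar\partial$-lemma --- note that your ``wrong Hodge bidegree'' heuristic is vacuous inside the single column $p=0$, where every class has type $(0,q)$, so the bidegree argument for de Rham formality does not transpose), you would still need a theorem identifying $\mathcal{O}(\pi_{1}^{N}(X,x))$ with the bar construction on this dg-algebra, i.e.\ a coherent analogue of Chen's theorem. Nothing in your write-up supplies this bridge, and it is not a formal consequence of the Tannakian setup. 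A secondary gap: your reduction stops at $k=\mathbb{C}$, but a smooth proper complex variety need not be projective or K\"ahler, so the $\partial\bar\partial$-lemma is not directly available; one must first replace $X$ by a smooth projective modification $\pi\colon\tilde X\to X$ with $R\pi_{*}\mathcal{O}_{\tilde X}=\mathcal{O}_{X}$, as the paper does.

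The paper closes the crux by a different mechanism that avoids both of these issues: it runs the same iterated-universal-extension formalism in the category of integrable connections, where Chen's theorem together with de Rham formality (Theorem \ref{case_deRham}) shows that the de Rham analogues $\xi_{dR,n}$ are isomorphisms; it then compares the two towers through the canonical maps $\beta_{n}\colon \underline{\mathcal{F}_{n}^{\vee}}\to\mathcal{E}_{n}^{\vee}$ and the Hodge--de Rham projections $\rho_{n}$, and uses a cup-product-compatible splitting $\iota_{i}\colon \mathrm{H}^{i}(X)\to\mathrm{H}^{i}_{dR}(X)$ coming from the Hodge decomposition to show that $\rho_{1}^{\otimes n}$ carries $T_{dR,n}$ onto $T_{n}$. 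Surjectivity of $\xi_{n}$ then follows from surjectivity of $\xi_{dR,n}$ by a diagram chase. So while your identification of the crux, of the quadratic presentation of $\mathfrak{g}$, and of the role of formality are all correct, the step that actually does the work is missing from your proposal, and the route you sketch for it would require re-proving a Dolbeault version of Chen's theorem that the paper's comparison argument renders unnecessary.
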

We will proceed as follows. In section $2$ we will include some discussion around the notion of an iterated unversal extension, at a categorical level. The notion of an iterated universal extension is not new, as for the special case of nilpotnet bundles it already appears in \cite{nori1982fundamental}. A more careful analysis of the iterated universal extensions of étale $\mathbb{Q}_{\ell}$-local systems and of integrable connections on curves is given in \cite{10.1215/00127094-1444296}\cite{10.1215/00127094-3146817}. The argument in \cite{10.1215/00127094-1444296} can be copied verbatim to show that, for a $k$-linear tensor abelian category $\mathcal{C}$ (under mild assumption), we can read off the Hopf algebra of the Tannakian group scheme of nilpotent objects, from an iterated universal extension. As a consequence, we will show in Corollary \ref{non_qua} that the Hopf algebra of the Tannakian group scheme of nilpotent objects can be approximated by an explicit ``quadratically presented'' Hopf algebra. The approximation is not canonical, but morphisms between associated graded pieces with respect to the conilpotency filtration are canonical. We remark that this approximation is not tight in general. \par
In section $3$, we will apply the general construction from section $2$ to study nilpotent bundles and nilpotent connections. As sketched above, the formality of compact Kähler manifolds ensures that the approximation given in Corollary \ref{non_qua} is an isomorphism in the case of nilpotent connections on a smooth projective variety. By comparing the iterated universal extensions of bundles and of connections, and using Hodge theory, we will be able to show that the approximation given in Corollary \ref{non_qua} is an isomorphism in the case of nilpotent bundles as well.  \par
As a consequence of our presentation for the Hopf algebra of $\pi_{1}^{N}(X,x)$, in section 4 we determine low degree group cohomology of the trivial representation of $\pi_{1}^{N}(X,x)$.
\begin{thm}[Theorem \ref{cohomology}]
Notations as above. The low degree group cohomology of the trivial representation of $\pi_{1}^{N}(X,x)$ is given by 
\[\mathrm{H}^{i}=
\begin{cases}
k, &\text{ for } i=0, \\
\mathrm{H}^{1}(X) &\text{ for } i=1, \\
\mathrm{Im} (\cup:\mathrm{H}^{1}(X)^{\otimes 2}\rightarrow \mathrm{H}^{2}(X)) &\text{ for } i=2.\\
\end{cases}\]
\end{thm}
We explain the meaning of the group cohomology in degree $2$, which is less obvious. The dimension of this cohomology is, roughly speaking, the number of relations needed, to depict $\pi_{1}^{N}(X,x)$ as a quotient of a free pro-unipotent group scheme. We refer to Remark \ref{relation} for the precise statement. In fact, by the same argument, the statement is valid for de Rham fundamental groups as well (after replacing coherent cohomology by de Rham cohomology). \par
We end the introduction by giving several remarks on existing results and literature. First, although we focus on the characteristic  $0$ case, Nori's fundamental group scheme of nilpotent bundles is well-defined in characteristic $p>0$ as well. In positive characteristic, this group scheme is of totally different nature, i.e, it is pro-finite (Proposition 3, Chapter \uppercase\expandafter{\romannumeral4}, Part \uppercase\expandafter{\romannumeral2}, \cite{nori1982fundamental}). Second, a unipotent homotopy theory is recently carried out in \cite{mondal2026unipotent}, where the technique is different from ours (without Tannakian theory nor Hodge theory). Third, our treatment for an iterated universal extension mainly grows from \cite{nori1982fundamental}\cite{10.1215/00127094-1444296}\cite{10.1215/00127094-3146817}, but to ask for a ``quadratically presented'' Hopf algebra is motivated by the study of Kähler groups, i.e, groups which can be realized as the topological fundamental group of a compact Kähler manifold. Indeed, a necessary condition for a finitely presented group to be Kähler is that its associated Malcev Lie algebra over $\mathbb{R}$ is ``quadratically presented''. See \cite{amoros1996fundamental} for an exposition. The ``quadratically presentedness'' there for pro-nilpotent Lie algebras seems equivalent to the ``quadratically presentedness'' of conilpotent commutative Hopf algebras in our sense, under the equivalence between pro-nilpotent Lie algebras and pro-unipotent affine group schemes (and therefore their Hopf algebras). Finally, the notion of a universal extension also appears in \cite{esnault2006gauss}\cite{d2022universal}, with a ``dualized'' formulation and with rather different purposes from ours.  
\subsection*{\normalsize Convention and Notation}
\begin{itemize}
\item
We use $k$ to denote a field of characteristic $0$. We use $\bar{k}$ to denote the algebraic closure of $k$. By a variety over $k$, we mean an integral scheme which is separated and of finite type over $k$. 
\item We only consider integrable connections on a smooth variety, so the adjective ``integrable'' is occasionally omitted. 
\item We use $\mathrm{AffGrp}_{k}$ to denote the category of affine group schemes over $k$. An affine group scheme is said to be an algebraic group, if its coordinate ring is a finitely generated $k$-algebra. We use $\mathrm{LieAlg}_{k}$ to denote the category of Lie algebras over $k$. We use $\mathrm{Vect}_{k}$ to denote the category of finite dimensional vector spaces over $k$.
\item We use $\cup$ to denote the cup product in the sense of derived functors. Occasionally we use Yoneda cup products as well, and clarification is always made if there is possible confusion.
\end{itemize}

\section{The iterated universal extension and the Hopf algebra of a pro-unipotent affine group scheme}

\begin{noco} Let $\mathcal{C}$ be a $k$-linear abelian tensor category, with unit $1_{\mathcal{C}}$. For any $\mathcal{E},\mathcal{F}\in \mathcal{C}$, we assume $\mathrm{Hom}(\mathcal{E},\mathcal{F})$ and the Yoneda $1$-extension $\mathrm{Ext}^{1}(\mathcal{E},\mathcal{F})$ to be finite dimensional over $k$. We also write $\mathrm{Ext}^{0}$ for $\mathrm{Hom}$. We assume $\mathrm{End}(1_{\mathcal{C}})=k$. 
\end{noco}

By a free object in $\mathcal{C}$, we mean an object of the form $V\otimes 1_{\mathcal{C}}$, where $V$ is a finite dimensional $k$-vector space. We recall the notion of a universal extension, along with some basic properties.
\begin{lem}
\label{univ_ext}

For any object $\mathcal{E}\in\mathcal{C}$, there is a unique extension, called the universal extension of $\mathcal{E}$
\begin{equation}
\label{univ_def}
\begin{tikzcd}
&0 \ar[r] &\mathrm{Ext}^{1}(\mathcal{E},1_{\mathcal{C}})^{\vee}\otimes1_{\mathcal{C}} \ar[r] &U(\mathcal{E}) \ar[r] &\mathcal{E} \ar[r] &0
\end{tikzcd}
\end{equation}
such that
\begin{enumerate}
\item It is universal in the sense that for any extension of $\mathcal{E}$ by a free object
\[\begin{tikzcd}
&0 \ar[r] & V\otimes1_{\mathcal{C}} \ar[r] &\mathcal{F} \ar[r] &\mathcal{E} \ar[r] &0, 
\end{tikzcd}
\]
there is a unique $k$-linear morphism $g:\mathrm{Ext}^{1}(\mathcal{E},1_{\mathcal{C}})^{\vee}\rightarrow V$, and a commutative diagram
\[\begin{tikzcd}[row sep=large]
&0 \ar[r] &\mathrm{Ext}^{1}(\mathcal{E},1_{\mathcal{C}})^{\vee}\otimes1_{\mathcal{C}}  \ar[r]\ar[d,"g\otimes id"]  &U(\mathcal{E}) \ar[r]\ar[d] &\mathcal{E} \ar[r]\ar[d,equal] &0 \\
&0 \ar[r] &V\otimes1_{\mathcal{C}} \ar[r] &\mathcal{F} \ar[r] &\mathcal{E} \ar[r] &0.
\end{tikzcd}
\] We remark that the morphism $U(\mathcal{E})\rightarrow \mathcal{F}$ in the middle may not be unique. 
\item 
Apply $\mathrm{Hom}(\cdot, 1_{\mathcal{C}})$ to \eqref{univ_def}, we obtain a long exact sequence
\[
\begin{tikzcd}[column sep=small]
&0 \ar[r] &\mathrm{Hom}(\mathcal{E},1_{\mathcal{C}}) \ar[r] &\mathrm{Hom}(U(\mathcal{E}),1_{\mathcal{C}}) \ar[r] &\mathrm{Ext}^{1}(\mathcal{E},1_{\mathcal{C}}) \ar[r,"\delta"] &\mathrm{Ext}^{1}(\mathcal{E},1_{\mathcal{C}}), 
\end{tikzcd}
\]
and the connecting morphism $\delta$ is the identity.
\end{enumerate}
\end{lem}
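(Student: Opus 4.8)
The plan is to construct $U(\mathcal{E})$ as the extension of $\mathcal{E}$ that ``classifies the identity endomorphism of $\mathrm{Ext}^{1}(\mathcal{E},1_{\mathcal{C}})^{\vee}$'', following the usual recipe for universal extensions (already present, for nilpotent bundles, in \cite{nori1982fundamental}). The identification on which everything rests is the following: for a finite-dimensional $k$-vector space $V$, a choice of basis identifies $V\otimes 1_{\mathcal{C}}$ with a finite direct sum of copies of $1_{\mathcal{C}}$, so additivity of $\mathrm{Ext}^{1}$ in the second variable gives an isomorphism
\[
\mathrm{Ext}^{1}(\mathcal{E},V\otimes 1_{\mathcal{C}})\;\cong\;V\otimes\mathrm{Ext}^{1}(\mathcal{E},1_{\mathcal{C}})\;\cong\;\mathrm{Hom}_{k}\!\big(\mathrm{Ext}^{1}(\mathcal{E},1_{\mathcal{C}})^{\vee},\,V\big),
\]
the last step using that $\mathrm{Ext}^{1}(\mathcal{E},1_{\mathcal{C}})$ is finite dimensional. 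One checks this composite is natural in $V$: for a linear map $g\colon V\to V'$ it intertwines push-out of extensions along $g\otimes\mathrm{id}_{1_{\mathcal{C}}}$ with post-composition by $g$. Define $U(\mathcal{E})$ to be the extension of $\mathcal{E}$ by $\mathrm{Ext}^{1}(\mathcal{E},1_{\mathcal{C}})^{\vee}\otimes 1_{\mathcal{C}}$ whose class $e_{\mathcal{E}}$ corresponds to $\mathrm{id}$ under this isomorphism with $V=\mathrm{Ext}^{1}(\mathcal{E},1_{\mathcal{C}})^{\vee}$.

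For the universal property, let $\mathcal{F}$ be an extension of $\mathcal{E}$ by $V\otimes 1_{\mathcal{C}}$ and let $g\colon\mathrm{Ext}^{1}(\mathcal{E},1_{\mathcal{C}})^{\vee}\to V$ be the linear map attached to the class $[\mathcal{F}]$. By the naturality above, $(g\otimes\mathrm{id})_{*}e_{\mathcal{E}}$ corresponds to $g\circ\mathrm{id}=g$, hence equals $[\mathcal{F}]$. By the standard fact that a morphism of extensions over $(g\otimes\mathrm{id}_{1_{\mathcal{C}}},\mathrm{id}_{\mathcal{E}})$ exists if and only if the push-out class agrees with $[\mathcal{F}]$, this produces the desired morphism $U(\mathcal{E})\to\mathcal{F}$ and the commutative diagram. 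Conversely, any $g$ admitting such a diagram satisfies $[\mathcal{F}]=(g\otimes\mathrm{id})_{*}e_{\mathcal{E}}=g$, so $g$ is forced to be the map attached to $[\mathcal{F}]$: this gives uniqueness of $g$. Any two middle arrows differ by a morphism $\mathcal{E}\to V\otimes 1_{\mathcal{C}}$, which explains why it need not be unique.

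For part (2), apply $\mathrm{Hom}(-,1_{\mathcal{C}})$ to \eqref{univ_def}. Since $\mathrm{End}(1_{\mathcal{C}})=k$ and $\mathrm{Ext}^{1}(\mathcal{E},1_{\mathcal{C}})$ is finite dimensional, we have a canonical identification $\mathrm{Hom}\big(\mathrm{Ext}^{1}(\mathcal{E},1_{\mathcal{C}})^{\vee}\otimes 1_{\mathcal{C}},\,1_{\mathcal{C}}\big)=\big(\mathrm{Ext}^{1}(\mathcal{E},1_{\mathcal{C}})^{\vee}\big)^{\vee}=\mathrm{Ext}^{1}(\mathcal{E},1_{\mathcal{C}})$, which explains the shape of the long exact sequence. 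The connecting map $\delta$ sends a morphism $\psi\colon\mathrm{Ext}^{1}(\mathcal{E},1_{\mathcal{C}})^{\vee}\otimes 1_{\mathcal{C}}\to 1_{\mathcal{C}}$ to the push-out class $\psi_{*}e_{\mathcal{E}}\in\mathrm{Ext}^{1}(\mathcal{E},1_{\mathcal{C}})$. Writing $\psi=\phi\otimes\mathrm{id}_{1_{\mathcal{C}}}$ for the unique $\phi\in\big(\mathrm{Ext}^{1}(\mathcal{E},1_{\mathcal{C}})^{\vee}\big)^{\vee}$ and applying the naturality from the first step with $V=k$, we get $\psi_{*}e_{\mathcal{E}}=\phi$; that is, $\delta$ is the identity under the identification above. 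As a byproduct, injectivity of $\delta$ forces the isomorphism $\mathrm{Hom}(U(\mathcal{E}),1_{\mathcal{C}})\xrightarrow{\sim}\mathrm{Hom}(\mathcal{E},1_{\mathcal{C}})$.

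The only genuine work is the bookkeeping in the first paragraph, namely verifying that the chain of isomorphisms identifying $\mathrm{Ext}^{1}(\mathcal{E},V\otimes 1_{\mathcal{C}})$ with $\mathrm{Hom}_{k}(\mathrm{Ext}^{1}(\mathcal{E},1_{\mathcal{C}})^{\vee},V)$ is natural in $V$ and compatible with push-out of extensions along $g\otimes\mathrm{id}_{1_{\mathcal{C}}}$; both the universal property and the identity $\delta=\mathrm{id}$ are then extracted from this compatibility, and everything else is formal manipulation of Yoneda $1$-extensions.
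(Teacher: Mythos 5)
Your proposal is correct: constructing $U(\mathcal{E})$ as the extension whose class corresponds to $\mathrm{id}$ under $\mathrm{Ext}^{1}(\mathcal{E},\mathrm{Ext}^{1}(\mathcal{E},1_{\mathcal{C}})^{\vee}\otimes 1_{\mathcal{C}})\cong\mathrm{End}(\mathrm{Ext}^{1}(\mathcal{E},1_{\mathcal{C}}))$, and deducing both the universal property and $\delta=\mathrm{id}$ from naturality of this identification under push-out, is the standard argument. The paper states this lemma as a recollection and gives no proof of its own, so there is no divergence to report; your write-up supplies exactly the intended construction, including the correct explanation of why the middle arrow is only unique up to a morphism $\mathcal{E}\rightarrow V\otimes 1_{\mathcal{C}}$.
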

\begin{proof}
See Page 100 \cite{nori1982fundamental} for the universal extension of a vector bundle on a proper variety. The argument can be copied verbatim for a general category $\mathcal{C}$.
\end{proof}
\begin{defn}An object $\mathcal{E}\in\mathcal{C}$ is said to be nilpotent of depth $\leq n$, if there exists a filtration 
\[0=\mathcal{E}_{0}\subset\mathcal{E}_{1}\subset...\subset \mathcal{E}_{n}=\mathcal{E},\]
with $\mathcal{E}_{i+1}/\mathcal{E}_{i}$ being free for any $i$.
\end{defn}
\begin{lem}[Proposition 1.2.1 \cite{shiho2000crystalline}]
The full subcategory $\mathcal{N}(\mathcal{C})$ of $\mathcal{C}$ consisting of nilpotent objects is a $k$-linear abelian rigid tensor category. 
\end{lem}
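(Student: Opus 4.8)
The plan is to check the three assertions folded into the statement in turn: that $\mathcal{N}(\mathcal{C})$ is an abelian subcategory of $\mathcal{C}$, stable under the kernel and cokernel of any of its morphisms and under finite direct sums, with exact inclusion; that it is stable under $\otimes$; and that every object of it is dualizable. Two points I would dispose of at once. First, $1_{\mathcal{C}}$ and every free object $V\otimes 1_{\mathcal{C}}$ are nilpotent of depth $\le 1$, so $\mathcal{N}(\mathcal{C})$ contains the unit. Second, nilpotence is stable under finite direct sums and under extensions, by concatenating filtrations: given an extension $0\to\mathcal{E}'\to\mathcal{E}\to\mathcal{E}''\to 0$ with $\mathrm{depth}(\mathcal{E}')\le m$ and $\mathrm{depth}(\mathcal{E}'')\le n$, glue the filtration of $\mathcal{E}'$ to the preimages in $\mathcal{E}$ of the filtration of $\mathcal{E}''$ to get a filtration of $\mathcal{E}$ of length $m+n$ whose graded pieces are those of $\mathcal{E}'$ followed by those of $\mathcal{E}''$, all free.

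The content lies in stability under kernels and cokernels, and I expect this to be the main obstacle. The base observation is that a morphism between free objects is elementary: by additivity together with $\mathrm{End}(1_{\mathcal{C}})=k$ one has $\mathrm{Hom}(V\otimes 1_{\mathcal{C}},W\otimes 1_{\mathcal{C}})=\mathrm{Hom}_{k}(V,W)$, so such a morphism is just a $k$-linear map $\varphi\colon V\to W$, and its kernel, image and cokernel formed in $\mathcal{C}$ are the free objects on $\ker\varphi$, $\mathrm{im}\,\varphi$, $\mathrm{coker}\,\varphi$. If in addition $1_{\mathcal{C}}$ is simple the argument closes immediately: free objects are then semisimple of isotypic type $1_{\mathcal{C}}$, with $\mathrm{Hom}(1_{\mathcal{C}},V\otimes 1_{\mathcal{C}})=V$, so every subobject and every quotient of a free object is free; propagating this along a filtration shows $\mathcal{N}(\mathcal{C})$ is stable in $\mathcal{C}$ under subobjects, quotients and extensions, i.e.\ is a Serre subcategory, whence kernels and cokernels computed in $\mathcal{C}$ land in $\mathcal{N}(\mathcal{C})$ and the inclusion is exact. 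In general I would instead induct on the depths of the source and the target: for $f\colon\mathcal{E}\to\mathcal{F}$ in $\mathcal{N}(\mathcal{C})$ with chosen filtrations, pass to the induced maps on the top free quotients, where the base observation applies, and reassemble $\ker f$, $\mathrm{im}\,f$, $\mathrm{coker}\,f$ out of free pieces and pieces of strictly smaller depth using stability under extensions. Either way $\mathcal{N}(\mathcal{C})$ acquires an abelian structure inherited from $\mathcal{C}$.

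Stability under $\otimes$ and rigidity are then bookkeeping. For the first: given nilpotent $\mathcal{E},\mathcal{F}$ and a filtration $\mathcal{F}_{\bullet}$ of $\mathcal{F}$ with free graded pieces $W_{j}\otimes 1_{\mathcal{C}}$, the subobjects $\mathcal{E}\otimes\mathcal{F}_{j}$ filter $\mathcal{E}\otimes\mathcal{F}$---using exactness of $\otimes$ in each variable---with graded pieces $\mathcal{E}\otimes(W_{j}\otimes 1_{\mathcal{C}})\cong\mathcal{E}^{\oplus\dim W_{j}}$ by the unit constraint; refining each step by the filtration of $\mathcal{E}$ exhibits $\mathcal{E}\otimes\mathcal{F}$ as nilpotent. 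For rigidity: $1_{\mathcal{C}}^{\vee}=1_{\mathcal{C}}$, so $V\otimes 1_{\mathcal{C}}$ is dualizable with dual $V^{\vee}\otimes 1_{\mathcal{C}}$; dualizable objects are stable under extensions, so every nilpotent $\mathcal{E}$ is dualizable in $\mathcal{C}$, and dualizing a filtration of $\mathcal{E}$ gives a filtration of $\mathcal{E}^{\vee}$ with free graded pieces $(V_{i}\otimes 1_{\mathcal{C}})^{\vee}=V_{i}^{\vee}\otimes 1_{\mathcal{C}}$; internal Homs exist as $\underline{\mathrm{Hom}}(\mathcal{E},\mathcal{F})=\mathcal{E}^{\vee}\otimes\mathcal{F}$, again nilpotent, so $\mathcal{N}(\mathcal{C})$ is rigid. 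In sum, everything but the abelian-ness is filtration bookkeeping; the difficulty there is the interplay between the subobject/quotient structure of $\mathcal{C}$ and the generation of $\mathcal{N}(\mathcal{C})$ by $1_{\mathcal{C}}$ under extensions, and it is exactly at that point that $\mathrm{End}(1_{\mathcal{C}})=k$ does the heavy lifting, reducing matters to linear algebra over $k$.
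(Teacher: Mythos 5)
The paper offers no proof of this lemma at all --- it is imported verbatim from Proposition 1.2.1 of Shiho --- so there is nothing in-paper to compare your route against; I am judging the proposal on its own. Your architecture is the standard one and most of it is sound: the unit and the free objects are nilpotent, nilpotence is closed under extensions by concatenating filtrations, and $\mathrm{Hom}(V\otimes 1_{\mathcal{C}},W\otimes 1_{\mathcal{C}})=\mathrm{Hom}_{k}(V,W)$ follows from additivity and $\mathrm{End}(1_{\mathcal{C}})=k$. (Two small cautions on the bookkeeping: in the paper's intended examples, e.g.\ $\mathcal{C}=\mathrm{Coh}(X)$, the tensor product is only right exact, so you should invoke exactness of $-\otimes\mathcal{E}$ for $\mathcal{E}$ dualizable rather than exactness of $\otimes$ on all of $\mathcal{C}$; and ``dualizable objects are stable under extensions'' needs an actual construction of the dual extension, though this is routine.) The genuine gap sits exactly where you predicted the difficulty: stability under kernels and cokernels. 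In the paper's main application the unit $\mathcal{O}_{X}$ is \emph{not} simple --- ideal sheaves are proper nonzero subobjects --- so your easy branch is unavailable, and your fallback, ``pass to the induced maps on the top free quotients, where the base observation applies, and reassemble,'' is not even well defined: a morphism $f\colon\mathcal{E}\to\mathcal{F}$ has no reason to carry $\mathcal{E}_{n-1}$ into $\mathcal{F}_{m-1}$ for your chosen (non-canonical) filtrations, so there is no induced map of top quotients to feed the base observation.

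The missing ingredient, which is where the whole content of the lemma lives, is the following: a nonzero morphism $\lambda\colon\mathcal{E}\to 1_{\mathcal{C}}$ from a nilpotent object is an epimorphism with nilpotent kernel (more generally, linearly independent elements of $\mathrm{Hom}(\mathcal{E},1_{\mathcal{C}})$ define an epimorphism onto a free object with nilpotent kernel). This is precisely the statement that rules out images such as ideal sheaves. One proves it by induction on depth: either $\lambda$ restricted to the bottom free subobject $V\otimes 1_{\mathcal{C}}$ is nonzero, in which case that restriction corresponds to a nonzero functional on $V$ and is split surjective, and the snake lemma exhibits $\ker\lambda$ as an extension of $\mathcal{E}/(V\otimes 1_{\mathcal{C}})$ by a free object; or $\lambda$ factors through $\mathcal{E}/(V\otimes 1_{\mathcal{C}})$ and one inducts. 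With this in hand, the correct double induction for a general $f\colon\mathcal{E}\to\mathcal{F}$ reduces the depth of the \emph{target} by composing with $\mathcal{F}\twoheadrightarrow\mathcal{F}'=\mathcal{F}/(W\otimes 1_{\mathcal{C}})$, applies the inductive hypothesis to that composite, and then analyzes the induced morphism from $\ker(\mathcal{E}\to\mathcal{F}')$ to $W\otimes 1_{\mathcal{C}}$ by the lemma above. Until that lemma is stated and proved, your argument does not close.
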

\begin{noco}
We fix a fiber functor $\omega: \mathcal{N}(\mathcal{C})\rightarrow \mathrm{Vect}_{k}$, i.e, a $k$-linear tensor functor which is faithful and exact. The category $\mathcal{N}(\mathcal{C})$ is neutral Tannakian, and we use $\pi_{1}^{nilp}(\mathcal{C},\omega)$ to denote the corresponding Tannakian group scheme over $k$. 
\end{noco}
A pointed object in $\mathcal{N}(\mathcal{C})$ is a pair $(\mathcal{E},e)$, with $\mathcal{E}\in \mathcal{N}(\mathcal{C})$ and $e\in\omega(\mathcal{E})$. A morphism between pointed objects $\alpha: (\mathcal{E},e)\rightarrow (\mathcal{F},f)$ is a morphism $\alpha:\mathcal{E}\rightarrow \mathcal{F}$ such that $\omega(\alpha)$ sends $e$ to $f$.

\begin{defn}
We define an iterated universal extension of pointed nilpotent objects to be a projective system of pointed nilpotent objects
\[\begin{tikzcd}
&{...}\ar[r] &(\mathcal{E}_{n},e_{n}) \ar[r,"\pi_{n-1}"] &{...} \ar[r,"\pi_{1}"] &(\mathcal{E}_{1},e_{1}),
\end{tikzcd}
\]
where $\mathcal{E}_{1}$ is $1_{\mathcal{C}}$ and $e_{1}$ is an arbitrary non-zero element in $\omega(1_{\mathcal{C}})$. When $(\mathcal{E}_{n-1},e_{n-1})$ is defined, we define $\pi_{n-1}:\mathcal{E}_{n}\rightarrow\mathcal{E}_{n-1}$ to be the universal extension $U(\mathcal{E}_{n-1})\rightarrow\mathcal{E}_{n-1}$ as in Lemma \ref{univ_def}. We take $e_{n}$ to be an arbitrary element in $\omega(\mathcal{E}_{n})$, which is mapped to $e_{n-1}$ by $\omega(\pi_{n})$.
\end{defn}
\begin{prop}
\label{ite_univ}
Fix an iterated universal extension of pointed nilpotent objects
\[\begin{tikzcd}
&{...}\ar[r] &(\mathcal{E}_{n},e_{n}) \ar[r,"\pi_{n-1}"] &{...} \ar[r,"\pi_{1}"] &(\mathcal{E}_{1},e_{1}).
\end{tikzcd}
\]
For each $n$, $\mathcal{E}_{n}$ is nilpotent of depth $\leq n$ with the following universal property:
for any pointed nilpotent object $(\mathcal{F},f)$ of depth $\leq n$, there is a unique morphism $(\mathcal{E}_{n},e_{n})\rightarrow (\mathcal{F},f)$ between pointed objects. 
\end{prop}
\begin{proof}
We proceed by induction. The case $n=1$ is fine. \par
Now we verify the universal property of $(\mathcal{E}_{n},e_{n})$. Let $(\mathcal{F},f)$ be a pointed nilpotent object of depth $\leq n$. There is a quotient $\mathcal{F}'$ of $\mathcal{F}$ which is nilpotent of depth $\leq n-1$ such that the kernel of $\mathcal{F}\twoheadrightarrow \mathcal{F}'$ is free, say $V\otimes 1_{\mathcal{C}}$. Let $f'$ be the image of $f$ along the map $\omega(\mathcal{F})\twoheadrightarrow \omega(\mathcal{F}')$ so $\mathcal{F}'$ is naturally pointed. Use the universal property of $(\mathcal{E}_{n-1},e_{n-1})$, there is a unique morphism $\alpha:(\mathcal{E}_{n-1},e_{n-1})\rightarrow (\mathcal{F}',f')$. We obtain the following commutative diagram
\[
\begin{tikzcd}[row sep=large]
&0 \ar[r] &V\otimes1_{\mathcal{C}} \ar[r]\ar[d,equal] &\tilde{\mathcal{F}} \ar[r]\ar[d] &\mathcal{E}_{n-1} \ar[d,"\alpha"]\ar[r] &0\\
&0 \ar[r] &V\otimes1_{\mathcal{C}} \ar[r]  &\mathcal{F} \ar[r] &\mathcal{F}' \ar[r] &0
\end{tikzcd}
\]
where the right square is a pull-back diagram and $\tilde{\mathcal{F}}$ is naturally pointed by some $\tilde{f}$ (which is sent to $e_{n-1}$ along the horizontal map, and to $f$ along the vertical map). Since $\mathcal{E}_{n}$ is the universal extension of $\mathcal{E}_{n-1}$, we have  a unique $g:\mathrm{Ext}^{1}(\mathcal{E}_{n-1},1_{\mathcal{C}})^{\vee}\rightarrow V$ and a commutative diagram
\[
\begin{tikzcd}[row sep=large]
&0 \ar[r] &\mathrm{Ext}^{1}(\mathcal{E}_{n-1},1_{\mathcal{C}})^{\vee}\otimes 1_{\mathcal{C}} \ar[r]\ar[d,"g\otimes id"] &\mathcal{E}_{n} \ar[r,"\pi_{n-1}"] \ar[d] &\mathcal{E}_{n-1} \ar[d,equal]\ar[r] &0\\
&0 \ar[r] &V\otimes1_{\mathcal{C}} \ar[r]\ar[d,equal] &\tilde{\mathcal{F}} \ar[r]\ar[d] &\mathcal{E}_{n-1} \ar[d,"\alpha"]\ar[r] &0\\
&0 \ar[r] &V\otimes1_{\mathcal{C}} \ar[r]  &\mathcal{F} \ar[r] &\mathcal{F}' \ar[r] &0
\end{tikzcd}
\] Now the composition of two vertical maps in the middle gives $\beta:\mathcal{E}_{n}\rightarrow \mathcal{F}$.  We show that we can modify $\beta$ so that $\omega(\beta)$ sends $e_{n}$ to $f$. Indeed, the difference $f-\omega(\beta)(e_{n})$ in $\omega(\mathcal{F})$ has zero image in $\omega(\mathcal{F}')$, so it lies in $V\otimes\omega(1_{\mathcal{C}})$. Using the universal property of $(\mathcal{E}_{n-1},e_{n-1})$ from the induction hypothesis, there is a unique morphism $\gamma: (\mathcal{E}_{n-1},e_{n-1})\rightarrow (V\otimes 1_{\mathcal{C}},f-\omega(\beta)(e_{n}))$. It suffices to replace $\beta$ by $\beta+\gamma\circ \pi_{n-1}$. \par
Now for any pointed object $(\mathcal{F},f)$ of depth $\leq n$, we have constructed a morphism $\beta: (\mathcal{E}_{n},e_{n})\rightarrow (\mathcal{F},f)$. We prove the uniqueness of $\beta$. We apply induction on the depth of $\mathcal{F}$. This is clear if $\mathcal{F}$ is of depth $\leq 1$, since it is then a free object and we have a sequence of isomorphisms
\[\begin{tikzcd}[column sep=small] &\mathrm{Hom}(\mathcal{E}_{1},1_{\mathcal{C}})\ar[r,"\sim"] &{...} \ar[r,"\sim"] &\mathrm{Hom}(\mathcal{E}_{n-1},1_{\mathcal{C}})\ar[r,"\sim"] &\mathrm{Hom}(\mathcal{E}_{n},1_{\mathcal{C}}). \end{tikzcd}\]
For an arbitrary $\mathcal{F}$ we consider an exact sequence
\[\begin{tikzcd}[column sep=small, row sep=large]
0\ar[r] &V\otimes1_{\mathcal{C}} \ar[r] &\mathcal{F} \ar[r] &\mathcal{F}' \ar[r] &0
\end{tikzcd}
\]
with $\mathcal{F}'$ of smaller depth. If there are morphisms $\beta_{1},\beta_{2}:(\mathcal{E}_{n},e_{n})\rightarrow (\mathcal{F},f)$, the induction hypothesis says that $\beta_{1}-\beta_{2}$ is a morphism $(\mathcal{E}_{n},e_{n})\rightarrow (V\otimes 1_{\mathcal{C}},0)$, and therefore $\beta_{1}-\beta_{2}=0$.  
\end{proof}
\begin{lem}
\label{depth}
If $\mathcal{E}$ is nilpotent of depth $\leq m$ and $\mathcal{F}$ is nilpotent of depth $\leq n$, $\mathcal{E}\otimes\mathcal{F}$ is nilpotent of depth $\leq m+n$. 
\end{lem}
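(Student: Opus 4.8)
The plan is to interleave the two given filtrations into a single filtration on $\mathcal{E}\otimes\mathcal{F}$ of length $m+n$ whose graded pieces are free. Fix filtrations $0=\mathcal{E}_{0}\subset\cdots\subset\mathcal{E}_{m}=\mathcal{E}$ and $0=\mathcal{F}_{0}\subset\cdots\subset\mathcal{F}_{n}=\mathcal{F}$ with free graded pieces $\mathrm{gr}_{i}\mathcal{E}=\mathcal{E}_{i}/\mathcal{E}_{i-1}$ and $\mathrm{gr}_{j}\mathcal{F}=\mathcal{F}_{j}/\mathcal{F}_{j-1}$. Since $\mathcal{N}(\mathcal{C})$ is a rigid abelian tensor category, $\otimes$ is exact on it, so each $\mathcal{E}_{i}\otimes\mathcal{F}_{j}$ is naturally a subobject of $\mathcal{E}\otimes\mathcal{F}$, and I would set
\[
G_{p}\;:=\;\sum_{i+j=p}\mathcal{E}_{i}\otimes\mathcal{F}_{j}\ \subseteq\ \mathcal{E}\otimes\mathcal{F},\qquad 0\le p\le m+n .
\]
Then $G_{\bullet}$ is visibly an increasing filtration with $G_{0}=0$ and $G_{m+n}=\mathcal{E}\otimes\mathcal{F}$.

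The content of the proof is the identification of the associated graded,
\[
G_{p}/G_{p-1}\ \cong\ \bigoplus_{i+j=p}\mathrm{gr}_{i}\mathcal{E}\otimes\mathrm{gr}_{j}\mathcal{F} .
\]
Given this, each summand equals $(V_{i}\otimes1_{\mathcal{C}})\otimes(W_{j}\otimes1_{\mathcal{C}})\cong(V_{i}\otimes_{k}W_{j})\otimes1_{\mathcal{C}}$ (writing $\mathrm{gr}_{i}\mathcal{E}=V_{i}\otimes1_{\mathcal{C}}$, $\mathrm{gr}_{j}\mathcal{F}=W_{j}\otimes1_{\mathcal{C}}$), hence is free; a finite direct sum of free objects is free; so every $G_{p}/G_{p-1}$ is free and $\mathcal{E}\otimes\mathcal{F}$ is nilpotent of depth $\le m+n$. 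The displayed isomorphism is the familiar statement that the associated graded of a tensor product of filtered objects is the tensor product of the associated gradeds, and it is exactly here that exactness of $\otimes$ is essential: I would prove it by a dévissage on $m$, using the short exact sequence $0\to\mathcal{E}_{m-1}\to\mathcal{E}\to\mathrm{gr}_{m}\mathcal{E}\to 0$ tensored with the filtered object $\mathcal{F}_{\bullet}$, together with the observation that the image of $\mathcal{E}\otimes\mathcal{F}_{p-m}=\mathcal{E}_{m}\otimes\mathcal{F}_{p-m}$ in $G_{p}/G_{p-1}$ splits the resulting extension — the one nontrivial input being the distributivity identity $G_{p-1}\cap(\mathcal{E}\otimes\mathcal{F}_{p-m})=\mathcal{E}_{m-1}\otimes\mathcal{F}_{p-m}+\mathcal{E}\otimes\mathcal{F}_{p-m-1}$, which follows from biexactness of $\otimes$.

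This distributivity/splitting is the step I expect to require care, and it is genuinely needed: the naive alternatives give strictly worse bounds. Refining the filtration $\mathcal{E}_{\bullet}\otimes\mathcal{F}$ by the $\mathcal{F}_{\bullet}$-filtration on each graded piece $\mathrm{gr}_{i}\mathcal{E}\otimes\mathcal{F}\cong\mathcal{F}^{\oplus\dim V_{i}}$ only gives depth $\le mn$, and viewing $\mathcal{E}\otimes\mathcal{F}$ as an extension of $(\mathcal{E}/\mathcal{E}_{m-1})\otimes\mathcal{F}$ by $\mathcal{E}_{m-1}\otimes\mathcal{F}$ only gives depth $\le m+2n-1$. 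Grouping the subobjects $\mathcal{E}_{i}\otimes\mathcal{F}_{j}$ by the value of $i+j$ is what cuts the number of graded pieces down to $m+n$, but for that grouping to work one really needs the graded pieces to be \emph{free}, not merely iterated extensions of free objects: an extension of free objects need not be free (for instance, the Atiyah bundle on an elliptic curve is a non-split self-extension of the structure sheaf). Beyond this point everything is bookkeeping.
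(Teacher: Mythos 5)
Your proof is correct, and it builds exactly the same filtration as the paper: your $G_{m+n-1}=\mathcal{E}_{m-1}\otimes\mathcal{F}+\mathcal{E}\otimes\mathcal{F}_{n-1}$ is the paper's ``summation'', and both arguments hinge on the same identification $\mathcal{E}\otimes\mathcal{F}/(\mathcal{E}_{m-1}\otimes\mathcal{F}+\mathcal{E}\otimes\mathcal{F}_{n-1})\cong(\mathcal{E}/\mathcal{E}_{m-1})\otimes(\mathcal{F}/\mathcal{F}_{n-1})$, which is free. The difference is in how the lower part of the filtration is certified. The paper inducts on $m+n$: by the inductive hypothesis $\mathcal{E}_{m-1}\otimes\mathcal{F}$ and $\mathcal{E}\otimes\mathcal{F}_{n-1}$ each have depth $\le m+n-1$, and their sum inside $\mathcal{E}\otimes\mathcal{F}$ is a quotient of their direct sum, hence again of depth $\le m+n-1$ (quotients of free objects are free since $1_{\mathcal{C}}$ is simple in the rigid category $\mathcal{N}(\mathcal{C})$). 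This entirely sidesteps the distributivity identity $G_{p-1}\cap(\mathcal{E}\otimes\mathcal{F}_{j})=\mathcal{E}_{i-1}\otimes\mathcal{F}_{j}+\mathcal{E}\otimes\mathcal{F}_{j-1}$, which is the one step you yourself flag as requiring care. Your route proves more — the full isomorphism $\mathrm{gr}(\mathcal{E}\otimes\mathcal{F})\cong\mathrm{gr}\,\mathcal{E}\otimes\mathrm{gr}\,\mathcal{F}$ — at the cost of that lemma; for the stated depth bound you only need each $G_{p}/G_{p-1}$ to be a \emph{quotient} of $\bigoplus_{i+j=p}\mathrm{gr}_{i}\mathcal{E}\otimes\mathrm{gr}_{j}\mathcal{F}$ (surjectivity is immediate), which, combined with the observation that quotients of free objects are free, already suffices and makes the distributivity step unnecessary. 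Your closing remarks about why the coarser decompositions give worse bounds, and why freeness (not just nilpotence) of the graded pieces matters, are apt.
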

\begin{proof}
We prove by induction on $n+m$. There are $\mathcal{E}'\subset\mathcal{E}$ of depth $\leq m-1$ and $\mathcal{F}'\subset \mathcal{F}$ of depth $\leq n-1$, such that $\mathcal{E}/\mathcal{E}'$ and $\mathcal{F}/\mathcal{F}'$ are free. Now by induction hypothesis, $\mathcal{E}'\otimes\mathcal{F}$ and $\mathcal{E}\otimes\mathcal{F}'$ are of depth $\leq n+m-1$. Therefore the summation $\mathcal{E}'\otimes\mathcal{F}+\mathcal{E}\otimes\mathcal{F}'$ in $\mathcal{E}\otimes\mathcal{F}$ has depth $\leq n+m-1$. We conclude by noting $\mathcal{E}\otimes\mathcal{G}/(\mathcal{E}'\otimes\mathcal{F}+\mathcal{E}\otimes\mathcal{F}')\cong \mathcal{E}/\mathcal{E}'\otimes \mathcal{F}/\mathcal{F}'$, which is free.
\end{proof}

\begin{prop}
\label{hopf_nil}
Fix an iterated universal extension \[\begin{tikzcd}
&{...}\ar[r] &(\mathcal{E}_{n},e_{n}) \ar[r,"\pi_{n-1}"] &{...} \ar[r,"\pi_{1}"] &(\mathcal{E}_{1},e_{1}).
\end{tikzcd}
\]The following data defines the Hopf algebra $\mathcal{O}(\pi_{1}^{nilp}(\mathcal{C},\omega))$.
\begin{enumerate}
\item The underlying vector space is \[\varinjlim_{n}\omega(\mathcal{E}^{\vee}_{n}).\]
\item We have $k\rightarrow \omega (\mathcal{E}_{n})$ sending $1$ to $e_{n}$. Dualizing and passing to colimit, we have the counit 
\[\epsilon: \varinjlim_{n} \omega(\mathcal{E}^{\vee}_{n}) \rightarrow k.\]
\item We have $\mathcal{E}_{n}\rightarrow \mathcal{E}_{1}$ sending $e_{n}$ to $e_{1}$. Therefore we have a map $\omega(\mathcal{E}_{n})\rightarrow \omega(\mathcal{E}_{1})\xrightarrow{\sim}k$, where the last map sends $e_{1}$ to $1\in k$. Dualizing and passing to colimit we have the unit 
\[\mu: k \rightarrow \varinjlim_{n} \omega(\mathcal{E}_{n}^{\vee}).\]
\item
By Lemma \ref{depth}, there is a unique morphism $\mathcal{E}_{n+m}\rightarrow \mathcal{E}_{m}\otimes\mathcal{E}_{n}$  sending $e_{n+m}$ to $e_{m}\otimes e_{n}$. Dualizing, applying the functor $\omega$ and passing to colimit, we have the product \[m:\varinjlim_{n} \omega(\mathcal{E}^{\vee}_{n}) \otimes \varinjlim_{n} \omega(\mathcal{E}^{\vee}_{n}) \rightarrow \varinjlim_{n} \omega(\mathcal{E}^{\vee}_{n}).\]
\item For any $s\in\omega(\mathcal{E}_{n})$ there is a unique morphism $f_{s}: (\mathcal{E}_{n},e_{n})\rightarrow (\mathcal{E}_{n},s)$. Consider the morphism $\omega(\mathcal{E}_{n})\otimes \omega(\mathcal{E}_{n})\rightarrow \omega(\mathcal{E}_{n})$, defined as $(s,t)\mapsto\omega(f_{s})(t)$.  Dualizing and passing to colimit we have the coproduct
\[\Delta: \varinjlim_{n} \omega(\mathcal{E}^{\vee}_{n})\rightarrow \varinjlim_{n}\omega(\mathcal{E}^{\vee}_{n})\otimes \varinjlim_{n} \omega(\mathcal{E}^{\vee}_{n}).\] 
\item According to the construction of the counit and the unit, $k$ is a direct summand of $\omega(\mathcal{E}_{n})$. There exists a unique morphism $\mathcal{E}_{n}\rightarrow\mathcal{E}_{n}$ sending $e_{n}$ to $1\in k\subset \omega(\mathcal{E}_{n})$. Dualizing, applying the functor $\omega$ and passing to colimit, we get the antipode
\[S:\varinjlim_{n} \omega(\mathcal{E}^{\vee}_{n})\rightarrow\varinjlim_{n} \omega(\mathcal{E}^{\vee}_{n}) .\]
\end{enumerate}
\end{prop}
\begin{proof}
This is essentially Proposition 2.9 \cite{10.1215/00127094-1444296}. The argument there can be copied verbatim. 
\end{proof}
\begin{rmk}
\label{intersection_univ}
Consider a $k$-linear tensor functor $F: \mathcal{C}'\rightarrow \mathcal{C}''$, and assume the fiber functors for the categories $\mathcal{N}(\mathcal{C}')$ and $\mathcal{N}(\mathcal{C}'')$ of nilpotent objects are chosen compatibly in the sense that $\omega'=\omega''\circ F$. 
We fix an iterated universal extension \[
\begin{tikzcd}
&...\ar[r,"\pi_{n}'"]&(\mathcal{E}_{n}',e_{n}')\ar[r,"\pi_{n-1}'"] &...\ar[r,"\pi_{1}'"]&(\mathcal{E}_{1}',e_{1}')
\end{tikzcd}\]
resp. 
\[
\begin{tikzcd}
&...\ar[r,"\pi_{n}''"]&(\mathcal{E}_{n}'',e_{n}'')\ar[r,"\pi_{n-1}''"] &...\ar[r,"\pi_{1}''"]&(\mathcal{E}_{1}'',e_{1}'')
\end{tikzcd}\]
for $\mathcal{C}'$ (resp. $\mathcal{C}''$). There is a unique morphism $\mathcal{E}_{n}''\rightarrow F(\mathcal{E}_{n}')$ sending $e_{n}''$ to $e_{n}'$, and it obviously induces a morphism $\omega'((\mathcal{E}'_{n})^{\vee}) \rightarrow \omega''((\mathcal{E}''_{n})^{\vee})$, which fits into a commutative square 
\[
\begin{tikzcd}[row sep=large]
&\omega'((\mathcal{E}'_{n})^{\vee}) \ar[r]\ar[d,hook] &\omega''((\mathcal{E}''_{n})^{\vee}) \ar[d,hook] \\
&\mathcal{O}(\pi_{1}^{nilp}(\mathcal{C}',\omega')) \ar[r]&\mathcal{O}(\pi_{1}^{nilp}(\mathcal{C}'',\omega'')).
\end{tikzcd}
\]
In case that the induced morphism $\pi_{1}^{nilp}(\mathcal{C}'',\omega'')\rightarrow \pi_{1}^{nilp}(\mathcal{C}',\omega')$ is faithfully flat, or equivalently the morphism 
$\mathcal{O}(\pi_{1}^{nilp}(\mathcal{C}',\omega')) \rightarrow\mathcal{O}(\pi_{1}^{nilp}(\mathcal{C}'',\omega'')) $ is an inclusion, we have 
\[\omega'((\mathcal{E}_{n}')^{\vee}) =\mathcal{O}(\pi_{1}^{nilp}(\mathcal{C}',\omega')) \cap \omega''((\mathcal{E}_{n}'')^{\vee}),\]
which can be checked by the universal property. 
\end{rmk}\par
We take the chance to describe an another Hopf algebra. 
\begin{lem}
\label{free_uni}
Let $V$ be a $d$-dimensional vector space over $k$. The following data defines a Hopf algebra $\mathcal{H}(V)$:
\begin{enumerate}
\item The underlying vector space is \[\bigoplus _{n\geq 0}V^{\otimes n}.\]
\item The unit and the counit are defined by natural morphisms $k=V^{\otimes 0}\hookrightarrow \bigoplus _{n\geq 0}V^{\otimes n}$ and $\bigoplus _{n\geq 0}V^{\otimes n}\twoheadrightarrow V^{\otimes 0}=k$ respectively.  
\item The product 
\[ m: \mathcal{H}(V)\bigotimes \mathcal{H}(V)\rightarrow \mathcal{H}(V)\] is the shuffle product. Namely, for $x_{1}\otimes...\otimes x_{m}\in V^{\otimes m}$ and $x_{m+1}\otimes...\otimes x_{n+m}\in V^{\otimes n}$, 
we define 
\[m(x_{1}\otimes...\otimes x_{m}\bigotimes x_{m+1}\otimes...\otimes x_{m+n})=\sum_{\sigma\in\shuffle(m,n)}x_{\sigma^{-1}(1)}\otimes...\otimes x_{\sigma^{-1}(n+m)}\] in $V^{\otimes(n+m)}$,
where $\shuffle (m,n)$ is the set of  permutations $\sigma$ of $\{1,...,n+m\}$, such that $\sigma(1)<...<\sigma(m)$ and $\sigma(m+1)<...<\sigma(n+m)$. \par
\item The coproduct \[\Delta: \mathcal{H}(V)\rightarrow \mathcal{H}(V)\bigotimes \mathcal{H}(V)\] is defined as:
\[\Delta(x_{1}\otimes...\otimes x_{n})=\sum_{i=0}^{n}x_{1}\otimes...\otimes x_{i}\bigotimes x_{i+1}\otimes...\otimes x_{n},\]
where the first and the last terms in the summation should be understood as $1\bigotimes x_{1}\otimes...\otimes x_{n}$ and $x_{1}\otimes...\otimes x_{n}\bigotimes 1$ respectively. 
\item The antipode $S$ is defined as 
\[S(x_{1}\otimes...\otimes x_{n})=x_{n}\otimes...\otimes x_{1}.\]
\end{enumerate} \par
The Hopf algebra $\mathcal{H}(V)$ is the Hopf algebra of a pro-unipotent affine group scheme $\mathcal{G}(V)$, which is the pro-unipotent completion over $k$ of the group freely generated by $d$ elements. 
\end{lem}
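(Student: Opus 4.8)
The plan is to realize the pro-unipotent completion $\mathcal{G}_{d}^{\mathrm{un}}$ as an instance of the formalism of this section and then read off its Hopf algebra from Proposition \ref{hopf_nil}. Take $\mathcal{C}=\mathrm{Rep}_{k}(\mathcal{G}_{d})$, the category of finite-dimensional $k$-linear representations of $\mathcal{G}_{d}$, with $1_{\mathcal{C}}$ the trivial representation and $\omega$ the forgetful functor. The standing hypotheses hold: $\mathrm{Hom}$ and $\mathrm{Ext}^{1}=\mathrm{H}^{1}(\mathcal{G}_{d},-)$ between finite-dimensional representations are finite-dimensional because $\mathcal{G}_{d}$ is finitely generated, and $\mathrm{End}(1_{\mathcal{C}})=k$. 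A nilpotent object of $\mathcal{C}$ is exactly a unipotent representation, so $\mathcal{N}(\mathcal{C})$ is the category of unipotent representations and $\pi_{1}^{nilp}(\mathcal{C},\omega)=\mathcal{G}_{d}^{\mathrm{un}}$ by the Tannakian description of the pro-unipotent completion.

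Next I would write down an explicit iterated universal extension. Let $I\subseteq k[\mathcal{G}_{d}]$ be the (two-sided) augmentation ideal, and set $\mathcal{E}_{n}=k[\mathcal{G}_{d}]/I^{n}$ with the left regular action and $e_{n}$ the class of the identity. The $I$-adic filtration on $\mathcal{E}_{n}$ has length $n$ with trivial graded pieces, so $\mathcal{E}_{n}$ is nilpotent of depth $\leq n$; it is finite-dimensional since for a free group $I^{j}/I^{j+1}\cong (I/I^{2})^{\otimes j}$ has dimension $d^{j}$. For the universal property, $I^{n}$ annihilates every representation of depth $\leq n$ (the filtration forces $I\cdot\mathcal{F}_{j}\subseteq\mathcal{F}_{j-1}$), so a $\mathcal{G}_{d}$-equivariant map $(\mathcal{E}_{n},e_{n})\to(\mathcal{F},f)$ is the same datum as the element $f\in\omega(\mathcal{F})$, via $[a]\mapsto a\cdot f$; this gives existence and the required uniqueness. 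The transition maps $\pi_{n-1}$ are the evident quotients and $\mathcal{E}_{1}=1_{\mathcal{C}}$, so, an iterated universal extension being unique, this one serves. Dualizing and passing to the colimit, $\varinjlim_{n}\omega(\mathcal{E}_{n}^{\vee})$ is the continuous dual of the completed group algebra $\widehat{k[\mathcal{G}_{d}]}$; the Magnus isomorphism (available since $\mathrm{char}\,k=0$) identifies $\widehat{k[\mathcal{G}_{d}]}$ with $\widehat{T(W)}=k\langle\langle X_{1},\dots,X_{d}\rangle\rangle$, $g_{i}\leftrightarrow\exp(X_{i})$, where $\dim_{k}W=d$, and its graded dual is $\bigoplus_{n\geq 0}V^{\otimes n}$ with $V=W^{\vee}$. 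This matches the underlying vector space of $\mathcal{H}(V)$, and $\dim V=d$.

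It then remains to identify the five structure maps of Proposition \ref{hopf_nil} with those of $\mathcal{H}(V)$ under this dictionary. The unit and counit are dual to the augmentation $\mathcal{E}_{n}\to\mathcal{E}_{1}=k$ and to $1\mapsto e_{n}$, hence become the inclusion and the projection of $V^{\otimes 0}$. The product is dual to $\mathcal{E}_{n+m}\to\mathcal{E}_{m}\otimes\mathcal{E}_{n}$, $e\mapsto e\otimes e$, i.e. to the coproduct $g\mapsto g\otimes g$ of the group algebra, which via Magnus is the algebra map determined by $X_{i}\mapsto X_{i}\otimes 1+1\otimes X_{i}$ (because $\exp X_{i}\otimes\exp X_{i}=\exp(X_{i}\otimes 1+1\otimes X_{i})$); its graded dual on $\bigoplus V^{\otimes n}$ is the shuffle product, this being the classical duality between the shuffle algebra and the deconcatenation/cofree coproduct on the tensor algebra. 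The coproduct of Proposition \ref{hopf_nil} is dual to $(s,t)\mapsto\omega(f_{s})(t)$, and since the $\mathcal{G}_{d}$-equivariant endomorphisms of the left regular module are the right multiplications $\rho_{a}$ with $\rho_{a}(e_{n})=a$, this map is (up to the order of the two arguments) the multiplication of $k[\mathcal{G}_{d}]/I^{n}$, whose graded dual is the deconcatenation coproduct on $\bigoplus V^{\otimes n}$. Finally the antipode is forced by the bialgebra structure and equals the pullback along $\mathcal{O}(\mathcal{G}_{d}^{\mathrm{un}})\to\mathcal{O}(\mathcal{G}_{d}^{\mathrm{un}})$ of the inversion $g\mapsto g^{-1}$, which one computes to be word reversal on $\bigoplus V^{\otimes n}$. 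The part of the argument that needs genuine care, and the main obstacle, is exactly this last round of bookkeeping: one must pin down the Magnus isomorphism and the pairing $T(W)^{\vee}\cong T(W^{\vee})$ consistently (e.g. using the left regular representation throughout together with the reversing pairing) so that each of the five maps emerges with the correct (non-)symmetry and no spurious opposite; with the conventions fixed this is routine. As a consistency check, $\mathcal{G}_{d}$ is free, so $\mathrm{H}^{2}(\mathcal{G}_{d},k)=0$, which is precisely what one expects for the Hopf algebra to be ``freely'' generated with no relations.
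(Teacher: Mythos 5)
Your argument is essentially correct, and it supplies a proof where the paper gives none: Lemma \ref{free_uni} is stated without proof and treated as classical. Your route is the standard one, and it meshes well with the paper's own formalism: you realize $\mathcal{G}_d^{\mathrm{un}}$ as $\pi_1^{nilp}(\mathrm{Rep}_k\mathcal{G}_d,\omega)$, exhibit the explicit model $\mathcal{E}_n=k[\mathcal{G}_d]/I^n$ of the iterated universal extension (the verification of the universal property via $I^n\cdot\mathcal{F}=0$ and $[a]\mapsto a\cdot f$ is correct, and the dimension count $\dim I^j/I^{j+1}=d^j$ for a free group is exactly what makes the paper's later dimension computation $\sum_{i=0}^{n-1}d^i$ in Corollary \ref{comparison} work), and then read the five structure maps of Proposition \ref{hopf_nil} through the Magnus isomorphism and the classical graded duality between the concatenation Hopf algebra (generators primitive) and the shuffle Hopf algebra with deconcatenation coproduct. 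One small imprecision: the iterated universal extension is unique only up to (unique pointed) isomorphism, which is all you need to feed your model into Proposition \ref{hopf_nil}. The convention-matching you defer at the end (choice of pairing $T(W)^\vee\cong T(W^\vee)$, the fact that $f_s$ is right multiplication so the map of Proposition \ref{hopf_nil}(5) is $\mu^{op}$ rather than $\mu$) is genuinely just bookkeeping and does work out; flagging it explicitly rather than sweeping it under the rug is the right call.

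One substantive correction: the antipode is not bare word reversal. Since $S$ is determined by the bialgebra structure, applying the antipode axiom to $x\in V^{\otimes 1}$ (where $\Delta(x)=x\otimes 1+1\otimes x$ and $\epsilon(x)=0$) forces $S(x)=-x$, and in general $S(x_1\otimes\dots\otimes x_n)=(-1)^n\,x_n\otimes\dots\otimes x_1$; equivalently, inversion $g\mapsto g^{-1}$ corresponds under Magnus to $X_i\mapsto -X_i$ composed with anti-multiplicativity, whence the sign. This sign is missing from the statement of the lemma as well, so the discrepancy originates in the paper, but since you claim to \emph{compute} the antipode from $g\mapsto g^{-1}$, your computation should have produced the factor $(-1)^n$.
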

\begin{proof}
The dual of $\mathcal{H}(V)$ is a completed Hopf algebra, which is treated as Proposition 8.4.1 \cite{fresse2017homotopy}. Our description is obtained by dualizing once again. 
\end{proof}
\begin{emp}[the argument is essentially from \cite{BAO2025103646}]
\label{cohomology_free}
Let $V$ be a vector space over $k$ of dimension $d$. Let $\mathcal{C}$ be the category $\mathrm{Rep}^{f}\mathcal{G}(V)$ of finite dimensional $k$-linear representations of $\mathcal{G}(V)$ and $\omega$ be the forgetful functor to $\mathrm{Vect}_{k}$. We use $\mathcal{D}$ to denote the category of all $k$-linear representations, and we define group cohomology by injective resolutions in $\mathcal{D}$ and then taking $\mathcal{G}(V)$-invariants. We claim for all representations $\mathcal{F}\in\mathcal{D}$, we have $\mathrm{H}^{2}(\mathcal{G}(V),\mathcal{F})=0$. A consequence is that, in an iterated universal extension, we have $\mathrm{H}^{2}(\mathcal{G}(V),\mathcal{E}^{\vee}_{n})=0$ for all $n\geq 1$, and we have canonical isomorphisms $\mathrm{H}^{1}(\mathcal{G}(V),\mathcal{E}^{\vee}_{n})\cong V^{\otimes n}$, $n\geq 1$, from the long exact sequences
\[\begin{tikzcd}[column sep=small ]
&0 \ar[r] &\mathrm{H}^{1}(\mathcal{G}(V),\mathcal{E}^{\vee}_{n}) \ar[r] &\mathrm{H}^{1}(\mathcal{G}(V),\mathcal{E}^{\vee}_{n-1})\otimes\mathrm{H}^{1}(\mathcal{G}(V),1_{\mathcal{C}}) \ar[r] &\mathrm{H}^{2}(\mathcal{G}(V),\mathcal{E}^{\vee}_{n-1}) \ar[r] &...
\end{tikzcd}\] associated to the defining sequences for $\mathcal{E}_{n}$, $n\geq 2$.  \par
By extension of scalars we reduce to the case $k=\mathbb{C}$. Each object $\mathcal{F}\in\mathcal{D}$ can be written as $\varinjlim_{i}\,\mathcal{F}_{i}$ with $\mathcal{F}_{i}\in\mathcal{C}$. By Riemann-Hilbert correspondence each $\mathcal{F}_{i}$ corresponds to an integrable connection $\tilde{\mathcal{F}}_{i}$ on $X=\mathbb{P}^{1}\setminus\{p_{1},...,p_{d}\}$. We write $\tilde{\mathcal{F}}=\varinjlim_{i}\,\tilde{\mathcal{F}_{i}}$ and define its de Rham cohomology to be $\mathrm{H}^{*}_{dR}(X,\tilde{\mathcal{F}})=\varinjlim_{i}\,\mathrm{H}^{*}_{dR}(X,\tilde{\mathcal{F}_{i}})$. There are natural morphisms between cohomology groups $\mathrm{H}^{*}(\mathcal{G}(V),\mathcal{F})\rightarrow \mathrm{H}^{*}_{dR}(X,\tilde{\mathcal{F}})$, which is obviously an isomorphism in degree $1$. Consider a short exact sequence
\[\begin{tikzcd}
&0 \ar[r] &\mathcal{F} \ar[r] &\mathcal{I} \ar[r] &\mathcal{I}/\mathcal{F} \ar[r] &0,
\end{tikzcd}\] with $\mathcal{I}$ being an injective object in $\mathcal{D}$. We conclude from the following commutative diagram with exact horizontal sequences
\[\begin{tikzcd}[row sep=large]
&0\ar[d,equal] & & &0\ar[d,equal] \\
&\mathrm{H}^{1}(\mathcal{G}(V),\mathcal{I}) \ar[r]\ar[d] &\mathrm{H}^{1}(\mathcal{G}(V),\mathcal{I}/\mathcal{F}) \ar[r]\ar[d] &\mathrm{H}^{2}(\mathcal{G}(V),\mathcal{F}) \ar[r]\ar[d] &\mathrm{H}^{2}(\mathcal{G}(V),\mathcal{I}) \ar[d] \\
&\mathrm{H}_{dR}^{1}(X,\tilde{\mathcal{I}}) \ar[r] &\mathrm{H}_{dR}^{1}(X,\tilde{\mathcal{I}}/\tilde{\mathcal{F}}) \ar[r] &\mathrm{H}_{dR}^{2}(X,\tilde{\mathcal{F}}) \ar[r]\ar[d,equal] &\mathrm{H}_{dR}^{2}(X,\tilde{\mathcal{I}})\ar[d,equal] \\ 
& & &0 &0.
\end{tikzcd}
\]
We remind that the first and the second vertical morphisms are isomorphisms, the vanishing of group cohomology is because $\mathcal{I}$ is injective, and the vanishing of de Rham cohomology is because $X$ is an affine curve. \par
In fact, we can use the same argument to show that $\mathrm{H}^{i}(\mathcal{G}(V),\mathcal{F})=0$ for any $i\geq 2$ and $\mathcal{F}\in\mathcal{D}$. \par
\end{emp}
We relate an iterated universal extension to a more familiar notion, i.e, the conilpotency filtration of a coaugmented coalgebra. 
\begin{defn}
Let $\mathcal{H}=(\mathcal{H},\mu,\epsilon,\Delta)$ be a coaugmented coalgebra. The dual $(\mathcal{H}^{\vee},\epsilon^{\vee},\mu^{\vee},\Delta^{\vee})$ is an augmented completed algebra, with an ideal $\mathcal{I}^{\vee}$ defined as the kernel of $\mu^{\vee}$. There is the conilpotency filtration
\[\mathcal{C}_{0}\mathcal{H}\subset \mathcal{C}_{1}\mathcal{H}\subset...\] defined by setting $\mathcal{C}_{n}\mathcal{H}=\mathrm{Ann}_{\mathcal{H}}(\mathcal{I}^{\vee})^{n+1}$, where $\mathrm{Ann}_{\mathcal{H}}(\mathcal{I}^{\vee})^{n+1}$ is the subspace of $\mathcal{H}$ killed by  $(\mathcal{I}^{\vee})^{n+1}$ under the natural pairing $\langle\cdot,\cdot \rangle: \mathcal{H}\otimes\mathcal{H}^{\vee}\rightarrow k$. \par
\end{defn}
\begin{emp}
\label{conil_free}
We consider the Hopf algebra in Proposition \ref{free_uni}. For each $n\geq 0$, we have $\mathcal{C}_{n}\mathcal{H}(V)=\bigoplus_{i\leq n}V^{\otimes i}$.
\end{emp}
\begin{rmk}
\label{intersection_conilpotency}
Suppose we have a morphism between coaugmented coalgebras $\mathcal{H}'\rightarrow \mathcal{H}''$, which is an inclusion between the underlying vector spaces. We have 
\[
\mathcal{C}_{n}\mathcal{H}'=\mathcal{C}_{n}\mathcal{H}''\cap \mathcal{H}'.
\]
\end{rmk} \par
\begin{lem}
\label{conil}
Let $\mathcal{H}=(\mathcal{H},\mu,\epsilon,m,\Delta,S)$ be a commutative Hopf algebra, and $\mathcal{F}$ be a representation of the corresponding affine group scheme, which is nilpotent of depth $\leq n$. Then $\mathcal{F}$ is a $\mathcal{H}$-comodule and the comodule structure factors as
\[\Delta_{\mathcal{F}}:
\mathcal{F}\rightarrow \mathcal{F}\otimes\mathcal{C}_{n-1}\mathcal{H} \hookrightarrow \mathcal{F}\otimes\mathcal{H}.
\]
\end{lem}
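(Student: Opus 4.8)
The plan is to argue purely with $\mathcal{H}$-comodules. Recall that a representation of $\mathrm{Spec}\,\mathcal{H}$ is by definition an $\mathcal{H}$-comodule; write $\rho_{\mathcal{F}}\colon\mathcal{F}\to\mathcal{F}\otimes\mathcal{H}$ for its coaction. The first step is to rephrase the target as a statement about iterated reduced coproducts. Let $\pi_{+}\colon\mathcal{H}\twoheadrightarrow\bar{\mathcal{H}}:=\mathcal{H}/k\cdot 1_{\mathcal{H}}$ be the projection. Since $\mathcal{I}^{\vee}=\ker\mu^{\vee}$ consists exactly of the functionals that vanish on $1_{\mathcal{H}}$, i.e.\ those factoring through $\pi_{+}$, a short double-annihilator computation — valid over the field $k$ because decomposable functionals $\psi_{1}\otimes\cdots\otimes\psi_{n}$ separate points of any tensor product of vector spaces — identifies $\mathcal{C}_{n-1}\mathcal{H}=\mathrm{Ann}_{\mathcal{H}}\big((\mathcal{I}^{\vee})^{n}\big)$ with $\ker\big(\mathcal{H}\xrightarrow{\Delta^{(n-1)}}\mathcal{H}^{\otimes n}\xrightarrow{\pi_{+}^{\otimes n}}\bar{\mathcal{H}}^{\otimes n}\big)$, where $\Delta^{(n-1)}$ is the iterated coproduct. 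Since tensoring with a fixed vector space is exact over $k$, the assertion that $\rho_{\mathcal{F}}$ factors through $\mathcal{F}\otimes\mathcal{C}_{n-1}\mathcal{H}$ is equivalent to the vanishing of the $n$-fold reduced coaction $\bar{\rho}_{\mathcal{F}}^{[n]}:=(\mathrm{id}_{\mathcal{F}}\otimes\pi_{+}^{\otimes n})\circ\rho_{\mathcal{F}}^{[n]}$, where $\rho_{\mathcal{F}}^{[n]}:=(\mathrm{id}_{\mathcal{F}}\otimes\Delta^{(n-1)})\circ\rho_{\mathcal{F}}$ is the $n$-fold iterated coaction.

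The second step records two formal properties of $\bar{\rho}^{[\bullet]}$, both proved by routine diagram chases using only coassociativity and the fact that operations on disjoint tensor factors commute: (a) a decomposition $\bar{\rho}_{\mathcal{F}}^{[n]}=(\bar{\rho}_{\mathcal{F}}^{[n-1]}\otimes\mathrm{id}_{\bar{\mathcal{H}}})\circ\bar{\rho}_{\mathcal{F}}^{[1]}$; and (b) naturality: for any morphism of $\mathcal{H}$-comodules $\alpha\colon\mathcal{F}\to\mathcal{G}$ one has $(\alpha\otimes\mathrm{id})\circ\bar{\rho}_{\mathcal{F}}^{[m]}=\bar{\rho}_{\mathcal{G}}^{[m]}\circ\alpha$. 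These would be stated as a short sublemma.

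The third step is an induction on the depth $n$. If $\mathcal{F}$ is free (depth $\leq 1$), then $\rho_{\mathcal{F}}(v)=v\otimes 1_{\mathcal{H}}$, so $\bar{\rho}_{\mathcal{F}}^{[1]}=0$ because $\pi_{+}(1_{\mathcal{H}})=0$ — equivalently $\mathcal{C}_{0}\mathcal{H}=k\cdot 1_{\mathcal{H}}$. For the inductive step, choose a short exact sequence $0\to\mathcal{F}'\to\mathcal{F}\xrightarrow{q}\mathcal{F}''\to 0$ with $\mathcal{F}''$ free and $\mathcal{F}'$ nilpotent of depth $\leq n-1$ (take $\mathcal{F}'=\mathcal{F}_{n-1}$ in a filtration $0=\mathcal{F}_{0}\subset\cdots\subset\mathcal{F}_{n}=\mathcal{F}$ with free subquotients). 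Applying (b) to $q$ together with the base case $\bar{\rho}_{\mathcal{F}''}^{[1]}=0$ gives $\bar{\rho}_{\mathcal{F}}^{[1]}(\mathcal{F})\subseteq\ker(q\otimes\mathrm{id}_{\bar{\mathcal{H}}})=\mathcal{F}'\otimes\bar{\mathcal{H}}$; applying (b) to the inclusion $\iota\colon\mathcal{F}'\hookrightarrow\mathcal{F}$ together with the inductive hypothesis $\bar{\rho}_{\mathcal{F}'}^{[n-1]}=0$ gives $\bar{\rho}_{\mathcal{F}}^{[n-1]}|_{\mathcal{F}'}=0$. Feeding both facts into (a),
\[
\bar{\rho}_{\mathcal{F}}^{[n]}(\mathcal{F})=(\bar{\rho}_{\mathcal{F}}^{[n-1]}\otimes\mathrm{id}_{\bar{\mathcal{H}}})\big(\bar{\rho}_{\mathcal{F}}^{[1]}(\mathcal{F})\big)\subseteq(\bar{\rho}_{\mathcal{F}}^{[n-1]}\otimes\mathrm{id}_{\bar{\mathcal{H}}})(\mathcal{F}'\otimes\bar{\mathcal{H}})=0,
\]
which closes the induction, and with it the lemma.

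The one genuinely delicate point I anticipate is the first step: identifying the annihilator description of $\mathcal{C}_{n-1}\mathcal{H}$ with the kernel of the iterated reduced coproduct when $\mathcal{H}$ is infinite-dimensional. The care needed is that $(\mathcal{I}^{\vee})^{n}$ is spanned only by decomposable functionals, so one must know these still detect the vanishing of an element of $\bar{\mathcal{H}}^{\otimes n}$; this is the standard fact that, for vector spaces $W_{1},\dots,W_{n}$, the functionals $\psi_{1}\otimes\cdots\otimes\psi_{n}$ with $\psi_{i}\in W_{i}^{\vee}$ separate points of $W_{1}\otimes\cdots\otimes W_{n}$. Everything downstream — properties (a) and (b), and the induction on depth — is formal.
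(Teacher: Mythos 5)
Your argument is correct and is at bottom the same as the paper's: both run an induction along the nilpotency filtration and both use comodule coassociativity $(\rho_{\mathcal{F}}\otimes\mathrm{id})\circ\rho_{\mathcal{F}}=(\mathrm{id}\otimes\Delta)\circ\rho_{\mathcal{F}}$ to pass from depth $n-1$ to depth $n$; the paper simply phrases the step dually, pairing $\Delta_{\mathcal{F}}(f)$ against elements $fg$ of $(\mathcal{I}^{\vee})^{n}$ rather than showing the $n$-fold reduced coaction into $\bar{\mathcal{H}}^{\otimes n}$ vanishes. Your version has the minor merit of making explicit the translation $\mathrm{Ann}_{\mathcal{H}}((\mathcal{I}^{\vee})^{n})=\ker(\pi_{+}^{\otimes n}\circ\Delta^{(n-1)})$, including the separating-functionals point that the paper leaves implicit.
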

\begin{proof}
By condition, there exists a filtration $0=\mathcal{F}_{0}\subset\mathcal{F}_{1}\subset\mathcal{F}_{2}\subset...\subset \mathcal{F}_{n}=\mathcal{F}$, such that for any $i$ and any $f_{i}\in\mathcal{F}_{i}$, we have
\[\Delta_{\mathcal{F}}(f_{i})\in f_{i}\otimes 1+ \mathcal{F}_{i-1}\otimes\mathcal{H}.\]
We claim \[\Delta_{\mathcal{F}}(\mathcal{F}_{i})\subset \mathcal{F}_{i}\otimes \mathcal{C}_{i-1}\mathcal{H}\] for any $i$. We proceed by induction on $i$. This is clear for $i=1$. To prove 
\[\Delta_{\mathcal{F}}(\mathcal{F}_{i+1})\subset \mathcal{F}_{i+1}\otimes \mathcal{C}_{i}\mathcal{H},\] it suffices to prove for any $f_{i+1}\in\mathcal{F}_{i+1}$, $g\in (\mathcal{I}^{\vee})^{i}$ and $h\in \mathcal{I}^{\vee}$, we have $\langle\Delta_{\mathcal{F}}(f_{i+1}),id\otimes gh\rangle=0$. We have \[\langle\Delta_{\mathcal{F}}(f_{i+1}),id\otimes gh\rangle=\langle( id\otimes\Delta)\circ\Delta_{\mathcal{F}}(f_{i+1}),id\otimes g\otimes h\rangle=\langle( \Delta_{\mathcal{F}}\otimes id)\circ\Delta_{\mathcal{F}}(f_{i+1}),id\otimes g\otimes h\rangle,\]
which is zero since $\Delta_{\mathcal{F}}(f_{i+1})\in f_{i+1}\otimes 1+\mathcal{F}_{i}\otimes \mathcal{H}$ and by induction hypothesis we have $\Delta_{\mathcal{F}}(\mathcal{F}_{i})\subset \mathcal{F}_{i}\otimes \mathcal{C}_{i-1}\mathcal{H}$, which is killed by $id\otimes g$. 
\end{proof}
\begin{cor}\label{comparison}
Notation as the same as Proposition \ref{hopf_nil}. We have $\omega(\mathcal{E}_{n}^{\vee})= \mathcal{C}_{n-1}\mathcal{O}(\pi_{1}^{nilp}(\mathcal{C}),\omega)$. 
\end{cor}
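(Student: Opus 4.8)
The plan is to compute both sides inside $\mathcal{H}:=\mathcal{O}(\pi_{1}^{nilp}(\mathcal{C},\omega))=\varinjlim_{m}\omega(\mathcal{E}_{m}^{\vee})$, using the dual algebra $\mathcal{H}^{\vee}$ together with its augmentation ideal $\mathcal{I}^{\vee}$. Recall from Proposition \ref{hopf_nil} that the inclusion $\omega(\mathcal{E}_{m}^{\vee})\hookrightarrow\mathcal{H}$ sends $\xi$ to the matrix coefficient $c_{m}(\xi):=(\xi\otimes\mathrm{id})\Delta_{\mathcal{E}_{m}}(e_{m})$, where $\Delta_{\mathcal{E}_{m}}:\omega(\mathcal{E}_{m})\to\omega(\mathcal{E}_{m})\otimes\mathcal{H}$ is the comodule structure; unwinding the pairing $\mathcal{H}\otimes\mathcal{H}^{\vee}\to k$ one gets $\langle c_{m}(\xi),v\rangle=\xi(v\cdot e_{m})$ for $v\in\mathcal{H}^{\vee}$, where $\mathcal{H}^{\vee}$ acts on $\omega(\mathcal{E}_{m})$ through $\Delta_{\mathcal{E}_{m}}$. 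A preliminary point I would record is that $e_{m}$ generates $\omega(\mathcal{E}_{m})$ as a representation: the subobject of $\mathcal{E}_{m}$ generated by $e_{m}$ is again nilpotent of depth $\leq m$ and is pointed by $e_{m}$, so the uniqueness clause in the universal property of $(\mathcal{E}_{m},e_{m})$ forces it to equal $\mathcal{E}_{m}$. Equivalently $\omega(\mathcal{E}_{m})=\mathcal{H}^{\vee}\cdot e_{m}$, and hence $(\mathcal{I}^{\vee})^{j}\cdot e_{m}=(\mathcal{I}^{\vee})^{j}\cdot\omega(\mathcal{E}_{m})$ because $\mathcal{I}^{\vee}$ is a two-sided ideal.

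The heart of the argument is the identity of subobjects
\[(\mathcal{I}^{\vee})^{n}\cdot\omega(\mathcal{E}_{m})=\omega\bigl(\ker(\mathcal{E}_{m}\twoheadrightarrow\mathcal{E}_{n})\bigr),\qquad m\geq n\geq 1,\]
which I will call $(\star)$. For the inclusion $\subseteq$, the canonical projection $\omega(\mathcal{E}_{m})\to\omega(\mathcal{E}_{n})$ is $\mathcal{H}^{\vee}$-equivariant and carries $(\mathcal{I}^{\vee})^{n}\cdot\omega(\mathcal{E}_{m})$ onto $(\mathcal{I}^{\vee})^{n}\cdot\omega(\mathcal{E}_{n})$, which vanishes because any nilpotent object of depth $\leq n$ is killed by $(\mathcal{I}^{\vee})^{n}$ (multiplication by $\mathcal{I}^{\vee}$ lowers a depth filtration by one step; this is part of the content of Lemma \ref{conil}). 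For the inclusion $\supseteq$, consider $Q:=\mathcal{E}_{m}/(\mathcal{I}^{\vee})^{n}\omega(\mathcal{E}_{m})$: the images of the submodules $(\mathcal{I}^{\vee})^{i}\omega(\mathcal{E}_{m})$, $0\leq i\leq n$, give a filtration of $Q$ of length $n$ whose graded pieces have trivial $G$-action, hence are free objects, so $Q$ is nilpotent of depth $\leq n$, in particular of depth $\leq m$. Now the quotient map $q:\mathcal{E}_{m}\to Q$ and the composite of $\mathcal{E}_{m}\twoheadrightarrow\mathcal{E}_{n}$ with the unique pointed morphism $\psi:\mathcal{E}_{n}\to Q$ supplied by the universal property of $\mathcal{E}_{n}$ are both pointed morphisms from $(\mathcal{E}_{m},e_{m})$ to the depth-$\leq m$ object $Q$; by the uniqueness clause for $(\mathcal{E}_{m},e_{m})$ they coincide, so $\ker q\supseteq\ker(\mathcal{E}_{m}\to\mathcal{E}_{n})$, which is exactly $\supseteq$.

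Granting $(\star)$, the Corollary follows quickly. By definition $\mathcal{C}_{n-1}\mathcal{H}=\{h\in\mathcal{H}:\langle h,v\rangle=0\text{ for all }v\in(\mathcal{I}^{\vee})^{n}\}$, and since $\mathcal{H}=\bigcup_{m\geq n}\omega(\mathcal{E}_{m}^{\vee})$ it suffices, for each such $m$, to decide which $\phi=c_{m}(\xi)$ lie in $\mathcal{C}_{n-1}\mathcal{H}$. By the pairing formula, $\phi\in\mathcal{C}_{n-1}\mathcal{H}$ iff $\xi$ annihilates $(\mathcal{I}^{\vee})^{n}\cdot e_{m}$, which by the preliminary observation and $(\star)$ equals $\omega(\ker(\mathcal{E}_{m}\to\mathcal{E}_{n}))$; and $\xi$ annihilates this kernel iff it factors through $\omega(\mathcal{E}_{m})\twoheadrightarrow\omega(\mathcal{E}_{n})$, i.e. iff $\phi$ lies in the image of $\omega(\mathcal{E}_{n}^{\vee})\hookrightarrow\omega(\mathcal{E}_{m}^{\vee})\hookrightarrow\mathcal{H}$ (the comodule map $\mathcal{E}_{m}\to\mathcal{E}_{n}$ identifies $c_{m}(\xi)$ with $c_{n}$ of the induced functional, and with the colimit map of Proposition \ref{hopf_nil}). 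Intersecting with $\omega(\mathcal{E}_{m}^{\vee})$ and passing to the colimit in $m$ yields $\mathcal{C}_{n-1}\mathcal{H}=\omega(\mathcal{E}_{n}^{\vee})$, as desired.

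The one step that is not formal is the inclusion $\supseteq$ in $(\star)$: the equivariant-image argument gives $\subseteq$ for free, but $\supseteq$ genuinely uses the defining property of the iterated universal extension — one must recognize $\mathcal{E}_{m}/(\mathcal{I}^{\vee})^{n}\omega(\mathcal{E}_{m})$ as nilpotent of depth $\leq n$ and invoke uniqueness in the universal property to conclude that it is already computed by $\mathcal{E}_{n}$. An alternative route, perhaps closer to the preceding set-up, is to reduce to the free case: choosing a faithfully flat $\mathcal{G}(V)\twoheadrightarrow\pi_{1}^{nilp}(\mathcal{C},\omega)$ with $V$ dual to $\mathrm{Ext}^{1}(1_{\mathcal{C}},1_{\mathcal{C}})$, the identity $\mathcal{C}_{n-1}\mathcal{H}(V)=\bigoplus_{j\leq n-1}V^{\otimes j}=\omega(\mathcal{E}_{n}^{\vee})$ is transparent from Lemma \ref{free_uni} and Example \ref{cohomology_free}, and then \eqref{intersection_univ} together with \eqref{intersection_conilpotency} propagates it down to $\mathcal{H}$.
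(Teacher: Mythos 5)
Your argument is correct, but it is not the route the paper takes. The paper proves only the easy inclusion $\omega(\mathcal{E}_{n}^{\vee})\subset\mathcal{C}_{n-1}\mathcal{H}$ directly (via Lemma \ref{conil} and $(\epsilon\otimes id)\circ\Delta=id$, which is essentially your $\subseteq$ half of $(\star)$), and then obtains the reverse inclusion by reducing to the free case $\mathcal{C}=\mathrm{Rep}^{f}\mathcal{G}(V)$, where both sides have dimension $\sum_{i=0}^{n-1}d^{i}$, and propagating the equality along a faithfully flat $\mathcal{G}(V)\rightarrow\pi_{1}^{nilp}(\mathcal{C},\omega)$ using \eqref{intersection_univ} and \eqref{intersection_conilpotency} --- i.e.\ exactly the ``alternative route'' you sketch in your last paragraph. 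Your primary argument instead establishes the sharper identity $(\star)$, $(\mathcal{I}^{\vee})^{n}\cdot\omega(\mathcal{E}_{m})=\omega(\ker(\mathcal{E}_{m}\twoheadrightarrow\mathcal{E}_{n}))$, by recognizing $\mathcal{E}_{m}/(\mathcal{I}^{\vee})^{n}\omega(\mathcal{E}_{m})$ as nilpotent of depth $\leq n$ and invoking the uniqueness clause of the universal property twice (once to see that $e_{m}$ generates, once to identify the two pointed maps to the quotient). This buys self-containedness: you avoid the dimension count of Example \ref{cohomology_free}, the intersection formulas, and in particular the forward reference to the existence of the faithfully flat $\mathcal{G}(V)\rightarrow\pi_{1}^{nilp}(\mathcal{C},\omega)$, which in the paper is only constructed after the Corollary; and $(\star)$ is a more informative statement than the Corollary itself. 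The cost is that you must justify that $(\mathcal{I}^{\vee})^{i}\omega(\mathcal{E}_{m})$ is genuinely a subobject in $\mathcal{N}(\mathcal{C})$ (an $\mathcal{H}^{\vee}$-stable subspace of a finite-dimensional comodule is a subcomodule) and that a comodule killed by $\mathcal{I}^{\vee}$ is trivial; both are routine but worth a sentence each if you write this up. The paper's version is shorter given that the surrounding machinery (Example \ref{cohomology_free}, \eqref{intersection_univ}, \eqref{intersection_conilpotency}) is needed elsewhere anyway.
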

\begin{proof}
By Lemma \ref{conil}, we have $\Delta(\omega(\mathcal{E}_{n}^{\vee}))\subset \omega(\mathcal{E}_{n}^{\vee}) \otimes \mathcal{C}_{n-1}\mathcal{O}(\pi_{1}^{nilp}(\mathcal{C}),\omega)$. Applying $\epsilon\otimes id$ to this inclusion, and noting $(\epsilon\otimes id)\circ\Delta=id$, we see an inclusion \begin{equation}\label{can_nil}\omega(\mathcal{E}_{n}^{\vee})\subset\mathcal{C}_{n-1}\mathcal{O}(\pi_{1}^{nilp}(\mathcal{C},\omega)).\end{equation} \par
Note the inclusion \eqref{can_nil} is an equality in the case $\mathcal{C}=\mathrm{Rep}^{f}\mathcal{G}(V)$. Indeed, from Example \ref{cohomology_free} and Example \ref{conil_free}, the dimension of both sides of \eqref{can_nil} is equal to $\sum_{i=0}^{n-1}d^{i}$, with $d=\mathrm{dim}\,V$. This forces \eqref{can_nil} to be an equality. We conclude the general case by picking up a faithfully flat morphism $\mathcal{G}(V)\rightarrow \pi_{1}^{nilp}(\mathcal{C},\omega)$ for suitable $V$ (which exists, see below), and using Remark \ref{intersection_univ} and Remark \ref{intersection_conilpotency}. 
\end{proof}
The pro-unipotent affine group scheme $\pi_{1}^{nilp}(\mathcal{C},\omega)$ is an inverse limit $\varprojlim G_{i}$ of unipotent algebraic groups. The corresponding pro-nilpotent Lie algebra is denoted by $\mathfrak{g}$, and is an inverse limit $\varprojlim \mathfrak{g}_{i}$ of nilpotent finite dimensional Lie algebras. We have natural identifications 
\[\begin{split}
\mathrm{Ext}^{1}(1_{\mathcal{C}},1_{\mathcal{C}}) &\cong\mathrm{Hom}_{\mathrm{AffGrp}_{k}}(\pi_{1}^{nilp}(\mathcal{C},\omega),\mathbb{G}_{a}) \cong \varinjlim\mathrm{Hom}_{\mathrm{AffGrp}_{k}}(G_{i},\mathbb{G}_{a})\\&\cong\varinjlim\mathrm{Hom}_{\mathrm{LieAlg}_{k}}(\mathfrak{g}_{i},k)\cong \varinjlim\mathrm{Hom}_{\mathrm{Vect}_{k}}(\mathfrak{g}^{ab}_{i},k).
\end{split}
\]
From these identifications, we see that $\varprojlim_{i}\mathfrak{g}_{i}^{ab}$ is finite dimensional, and can be identified with $\mathrm{Ext}^{1}(1_{\mathcal{C}},1_{\mathcal{C}})^{\vee}$. An arbitrary lifting of vector spaces
\begin{equation}\label{lift_lie}\begin{tikzcd}
&\varprojlim\mathfrak{g}_{i}^{ab} \ar[d,"\exists"] \ar[drr,equal]\\
 &\mathfrak{g} \ar[r,equal]&\varprojlim\mathfrak{g}_{i} \ar[r,twoheadrightarrow] &\varprojlim\mathfrak{g}_{i}^{ab}.
\end{tikzcd}
\end{equation}
gives rise to a surjective morphism $\mathcal{L}(\varprojlim\mathfrak{g}_{i}^{ab})\twoheadrightarrow \mathfrak{g}$ between Lie algebras, 
where the left hand side is the pro-nilpotent completion of the free Lie algebra associated to $\varprojlim\mathfrak{g}_{i}^{ab}$. This corresponds to a faithfully flat morphism $\mathcal{G}(\mathrm{Ext}^{1}(1_{\mathcal{C}},1_{\mathcal{C}}))\rightarrow \pi_{1}^{nilp}(\mathcal{C},\omega)$, and therefore a morphism  \begin{equation}\label{noncan}\zeta':\mathcal{O}(\pi_{1}^{nilp}(\mathcal{C},\omega))\hookrightarrow\mathcal{H}(\mathrm{Ext}^{1}(1_{\mathcal{C}},1_{\mathcal{C}}))\end{equation} between Hopf algebras. Using the same notation as Proposition \ref{hopf_nil}, we have
\[\zeta_{n}': \mathrm{Ext}^{1}(\mathcal{E}_{n},1_{\mathcal{C}})\hookrightarrow \mathrm{Ext}^{1}(1_{\mathcal{C}},1_{\mathcal{C}})^{\otimes n}, n\geq 1\]
by Corollary \ref{comparison}, with $\zeta_{1}'=id$ by construction. A priori $\zeta_{n}'$ may not be canonical, as $\zeta'$ is not (depending on the lifting \eqref{lift_lie}). \par 
Applying $\mathrm{Hom}(\cdot,1_{\mathcal{C}})$ to the defining sequence for $\mathcal{E}_{n}$, from the long exact sequence we get an inclusion 
\begin{equation}\label{induc}\tau_{n}:\mathrm{Ext}^{1}(\mathcal{E}_{n},1_{\mathcal{C}})\hookrightarrow \mathrm{Ext}^{1}(\mathcal{E}_{n-1},1_{\mathcal{C}})\otimes\mathrm{Ext}^{1}(1_{\mathcal{C}},1_{\mathcal{C}}),\end{equation}for $n\geq 2$.
We define $\zeta_{n}: \mathrm{Ext}^{1}(\mathcal{E}_{n},1_{\mathcal{C}})\rightarrow \mathrm{Ext}^{1}(1_{\mathcal{C}},1_{\mathcal{C}})^{\otimes n}$ by settting $\zeta_{1}=id$, and 
\begin{equation}\label{induc'}\zeta_{n}=\tau_{n}\circ (\tau_{n-1}\otimes id)\circ (\tau_{n-2}\otimes id^{\otimes2})\circ...\circ (\tau_{2}\otimes id^{\otimes(n-2)})\end{equation} for $n\geq 2$. 
\begin{lem}We have $\zeta_{n}=\zeta_{n}'$.
\label{compare_grad}
\end{lem}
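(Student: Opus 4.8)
\textit{Proof proposal.} Unwinding \eqref{induc'}, the map $\zeta_{n}$ is defined by $\zeta_{1}=\mathrm{id}$ and $\zeta_{n}=(\zeta_{n-1}\otimes\mathrm{id})\circ\tau_{n}$ for $n\geq 2$ (apply $\tau_{n}$ first). So the plan is to prove that $\zeta_{n}'$ obeys exactly the same recursion. Throughout I identify $\mathrm{gr}_{m}\mathcal{O}(\pi_{1}^{nilp}(\mathcal{C},\omega))\cong\mathrm{Ext}^{1}(\mathcal{E}_{m},1_{\mathcal{C}})$ by combining Corollary \ref{comparison} with $\omega$ applied to the dual of the defining sequence \eqref{define_ext}, and $\mathrm{gr}_{m}\mathcal{H}(\mathrm{Ext}^{1}(1_{\mathcal{C}},1_{\mathcal{C}}))=\mathrm{Ext}^{1}(1_{\mathcal{C}},1_{\mathcal{C}})^{\otimes m}$; with these identifications, $\zeta_{n}'=\mathrm{gr}_{n}\zeta'$, and $\zeta_{1}'=\mathrm{id}$ by construction.

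First I would record two formal facts about $\zeta'$. By \eqref{intersection_univ} and \eqref{intersection_conilpotency}, $\zeta'$ is strictly compatible with the conilpotency filtrations (the filtration on the source is the restriction of the one on the target), so $\mathrm{gr}\,\zeta'$ is an \emph{injective} morphism of graded coalgebras $\mathrm{gr}\,\mathcal{O}(\pi_{1}^{nilp})\hookrightarrow\mathrm{gr}\,\mathcal{H}(V)$, where $V=\mathrm{Ext}^{1}(1_{\mathcal{C}},1_{\mathcal{C}})$; since the conilpotency filtration of $\mathcal{H}(V)=T^{c}(V)$ is split by the tensor grading, $\mathrm{gr}\,\mathcal{H}(V)=\mathcal{H}(V)$ with its (already graded) deconcatenation coproduct. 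Now $T^{c}(V)$ is the cofree conilpotent coalgebra on its degree-one part $V$, so the coalgebra map $\mathrm{gr}\,\zeta'$ is uniquely determined by its corestriction $\mathrm{pr}_{1}\circ\mathrm{gr}\,\zeta'\colon\overline{\mathrm{gr}\,\mathcal{O}}\to V$; as $\mathrm{gr}\,\zeta'$ preserves degree and $\mathrm{pr}_{1}$ is the degree-one projection, this corestriction vanishes on $\mathrm{gr}_{m}$ for $m\geq 2$ and equals $\zeta_{1}'=\mathrm{id}$ on $\mathrm{gr}_{1}$. Feeding this into the cofree formula $\mathrm{gr}_{n}\zeta'=(\mathrm{pr}_{1}\circ\mathrm{gr}\,\zeta')^{\otimes n}\circ\bar{\Delta}^{(n-1)}$ and using coassociativity, one gets $\zeta_{n}'=(\zeta_{n-1}'\otimes\mathrm{id})\circ\delta_{n-1,1}$, where $\delta_{n-1,1}\colon\mathrm{gr}_{n}\mathcal{O}\to\mathrm{gr}_{n-1}\mathcal{O}\otimes\mathrm{gr}_{1}\mathcal{O}$ is the $(n-1,1)$-graded component of the associated graded coproduct of $\mathcal{O}(\pi_{1}^{nilp})$.

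So everything comes down to identifying $\delta_{n-1,1}$ with $\tau_{n}$; this is the heart of the matter. I would prove it by reducing to the case $\mathcal{C}=\mathrm{Rep}^{f}\mathcal{G}(V)$: choose a faithfully flat $\mathcal{G}(V)\to\pi_{1}^{nilp}(\mathcal{C},\omega)$ as in \eqref{lift_lie} (so the induced map $\mathcal{O}(\pi_{1}^{nilp})\hookrightarrow\mathcal{H}(V)$ is exactly $\zeta'$), and use that both the construction of $\tau_{n}$ — apply $\mathrm{Hom}(\cdot,1_{\mathcal{C}})$ to \eqref{define_ext} and invoke Lemma \ref{univ_ext}(2) — and the coproduct are functorial for the corresponding exact faithful tensor functor $F$, via the map of short exact sequences obtained by applying $F$ to \eqref{define_ext}. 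Since $\mathrm{gr}_{n-1}\zeta'\otimes\mathrm{gr}_{1}\zeta'$ is injective, the identity $\delta_{n-1,1}=\tau_{n}$ for $\mathcal{C}$ then follows from the one for $\mathrm{Rep}^{f}\mathcal{G}(V)$. For $\mathrm{Rep}^{f}\mathcal{G}(V)$ itself, $\mathcal{O}=\mathcal{H}(V)=T^{c}(V)$, the associated graded coproduct is literally deconcatenation, whose $(n-1,1)$-component is ``split off the last tensor factor''; on the other side, by Example \ref{cohomology_free} the vanishing $\mathrm{H}^{2}(\mathcal{G}(V),\mathcal{E}_{n-1}^{\vee})=0$ makes $\tau_{n}$ an isomorphism, and the identification $\mathrm{Ext}^{1}(\mathcal{E}_{m},1)\cong V^{\otimes m}$ used there is precisely the iterate of $\tau$, so $\tau_{n}$ \emph{is} the ``split off last factor'' map, compatibly with the identification $\mathrm{gr}_{m}\mathcal{O}(\mathcal{G}(V))=V^{\otimes m}$ coming from Corollary \ref{comparison}. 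Combining the two recursions with $\zeta_{1}=\zeta_{1}'=\mathrm{id}$ finishes the proof.

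I expect the main obstacle to be the bookkeeping in the last step: making all the canonical identifications — Corollary \ref{comparison}, the dual of \eqref{define_ext}, Example \ref{cohomology_free}, and the description of the coproduct in Proposition \ref{hopf_nil} — point the same way, so that ``the $(n-1,1)$-component of $\mathrm{gr}\,\Delta$'' and ``$\tau_{n}$'' are literally the same arrow and not merely abstractly isomorphic. An alternative to the reduction to $\mathcal{G}(V)$, which sidesteps some of this, is to compute $\delta_{n-1,1}$ directly from Proposition \ref{hopf_nil}(5): the coproduct on the subcoalgebra $\omega(\mathcal{E}_{n+1}^{\vee})=\mathcal{C}_{n}\mathcal{O}$ is dual to $(s,t)\mapsto\omega(f_{s})(t)$, and tracing this through Lemma \ref{conil} together with the universality clause of Lemma \ref{univ_ext}(2) again yields $\tau_{n}$; but this route is heavier on explicit diagram chasing.
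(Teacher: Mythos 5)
Your overall strategy coincides with the paper's: both proofs reduce the lemma to showing that $\zeta'$ satisfies the same recursion $\zeta'_{n}=(\zeta'_{n-1}\otimes\mathrm{id})\circ\tau_{n}$ that defines $\zeta_{n}$, and your cofree-coalgebra argument reducing this to the single identity $\delta_{n-1,1}=\tau_{n}$ is correct and arguably cleaner than the paper's diagram bookkeeping. The divergence is in how that identity is established, and that is where your proposal has a genuine gap.

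Your verification of $\delta_{n-1,1}=\tau_{n}$ in the base case $\mathcal{C}=\mathrm{Rep}^{f}\mathcal{G}(V)$ is circular as written. You argue that $\tau_{n}$ ``is'' the split-off-the-last-factor map because the identification $\mathrm{Ext}^{1}(\mathcal{E}_{m},1_{\mathcal{C}})\cong V^{\otimes m}$ of Example \ref{cohomology_free} is the iterate of $\tau$ --- but under that identification the claim is a tautology, and what you actually need is that this iterated-$\tau$ identification agrees with the identification $\mathrm{gr}_{m}\mathcal{H}(V)=V^{\otimes m}$ coming from Corollary \ref{comparison} and the tensor grading. That agreement is precisely the statement $\zeta_{m}=\zeta'_{m}$ for $\mathcal{G}(V)$ (with $\zeta'=\mathrm{id}$), i.e., the lemma itself in the very case you are reducing to. You flag this as ``bookkeeping,'' but it is the entire content of the step: one must either compute directly, from the explicit comodule $\omega((\mathcal{E}_{n})^{\vee})=\bigoplus_{i\leq n-1}V^{\otimes i}$ with the deconcatenation coaction, that the restriction map to $\mathrm{Ext}^{1}$ of the kernel of \eqref{define_ext} is split-off-the-last-factor for the filtration identification, or argue differently. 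The paper avoids isolating a base case altogether: it realizes $\tau_{n+1}$ and the graded quotient maps uniformly as connecting morphisms and $\mathrm{H}^{1}$'s of the ladder \eqref{ad}, pins down every identification by Lemma \ref{univ_ext}(2) (the connecting morphism attached to a universal extension is the identity), and links the cohomology of $\pi_{1}^{nilp}(\mathcal{C},\omega)$ to that of $\mathcal{G}(V)$ by inflation, as in diagrams \eqref{n+1} and \eqref{n}. If you wish to keep your route, your own suggested alternative --- computing $\delta_{n-1,1}$ directly from the coproduct of Proposition \ref{hopf_nil}(5) via Lemma \ref{conil} and Lemma \ref{univ_ext}(2) --- is the honest way to close the gap; the reduction to $\mathcal{G}(V)$ does not by itself make the crucial identification any easier.
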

\begin{proof}
Several natural identifications will be made in the proof:
\begin{enumerate}
\item
Representations of $\pi_{1}^{nilp}(\mathcal{C},\omega)$ are naturally representations of $\mathcal{G}(\mathrm{Ext}^{1}(1_{\mathcal{C}},1_{\mathcal{C}}))$. 
\item For any object $\mathcal{F}\in\mathcal{N}(\mathcal{C})$, we have\[\mathrm{H}^{1}(\pi_{1}^{nilp}(\mathcal{C},\omega),\omega(\mathcal{F}^{\vee}))\cong \mathrm{Ext}^{1}(\mathcal{F},1_{\mathcal{C}}).\]
\item For the category $\mathrm{Rep}^{f}\mathcal{G}(\mathrm{Ext}^{1}(1_{\mathcal{C}},1_{\mathcal{C}}))$, $(\bigoplus_{i=0}^{n-1}\mathrm{Ext}^{1}(1_{\mathcal{C}},1_{\mathcal{C}})^{\otimes i})^{\vee}$, $n\geq 1$ give an iterated universal extension. We have a natural identification
\[\mathrm{H}^{1}(\mathcal{G}(\mathrm{Ext}^{1}(1_{\mathcal{C}},1_{\mathcal{C}})),\bigoplus_{i=0}^{n-1}\mathrm{Ext}^{1}(1_{\mathcal{C}},1_{\mathcal{C}})^{\otimes i})\cong \mathrm{Ext}^{1}(1_{\mathcal{C}},1_{\mathcal{C}})^{\otimes n}.\]
\end{enumerate} \par
For each $n\geq 1$, we have
\[\zeta'_{\leq n}: \omega(\mathcal{E}_{n}^{\vee})\rightarrow \bigoplus_{i=0}^{n-1}\mathrm{Ext}^{1}(1_{\mathcal{C}},1_{\mathcal{C}})^{\otimes i}\] induced by $\zeta'$ after passage to the conilpotency filtration, which is a morphism between representations of $\mathcal{G}(\mathrm{Ext}^{1}(1_{\mathcal{C}},1_{\mathcal{C}}))$. We have a commutative diagram of $\mathcal{G}(\mathrm{Ext}^{1}(1_{\mathcal{C}},1_{\mathcal{C}}))$-representations with exact horizontal sequences:
\begin{equation}
\label{ad}
\begin{tikzcd}[column sep=small, row sep=large]
&0 \ar[r] & \omega(\mathcal{E}_{n}^{\vee}) \ar[r]\ar[d,"\zeta'_{\leq n}"]  &\omega(\mathcal{E}_{n+1}^{\vee}) \ar[r]\ar[d,"\zeta'_{\leq n+1}"] &\mathrm{Ext}^{1}(\mathcal{E}_{n},1_{\mathcal{C}}) \ar[r]\ar[d,"\zeta_{n}'"] &0 \\
&0 \ar[r] &\bigoplus_{i=0}^{n-1}\mathrm{Ext}^{1}(1_{\mathcal{C}},1_{\mathcal{C}})^{\otimes i} \ar[r] &\bigoplus_{i=0}^{n}\mathrm{Ext}^{1}(1_{\mathcal{C}},1_{\mathcal{C}})^{\otimes i} \ar[r] &\mathrm{Ext}^{1}(1_{\mathcal{C}},1_{\mathcal{C}})^{\otimes n} \ar[r] &0.
\end{tikzcd}
\end{equation}
Here $\zeta_{n}'$ should be considered as a morphism between trivial representations. For each $n\geq 1$ , considering connecting morphisms from cohomology in degree $0$ to cohomology in degree $1$ and inflation from group cohomology of $\pi_{1}^{nilp}(\mathcal{C},\omega)$ to group cohomology of $\mathcal{G}(\mathrm{Ext}^{1}(1_{\mathcal{C}},1_{\mathcal{C}}))$, we obtain a commutative diagram from \eqref{ad}
\begin{equation}\label{n+1}
\begin{tikzcd}[row sep=large]
&\mathrm{Ext}^{1}(\mathcal{E}_{n},1_{\mathcal{C}}) \ar[r,equal]\ar[d,equal] &\mathrm{Ext}^{1}(\mathcal{E}_{n},1_{\mathcal{C}})\ar[d,"inflation"]\\
&\mathrm{Ext}^{1}(\mathcal{E}_{n},1_{\mathcal{C}}) \ar[r]\ar[d,"\zeta_{n}'"] &\mathrm{H}^{1}(\mathcal{G}(\mathrm{Ext}^{1}(1_{\mathcal{C}},1_{\mathcal{C}})),\omega(\mathcal{E}_{n}^{\vee}))\ar[d,"\mathrm{H}^{1}(\zeta'_{\leq n})"] \\
&\mathrm{Ext}^{1}(1_{\mathcal{C}},1_{\mathcal{C}})^{\otimes n} \ar[r,equal] &\mathrm{H}^{1}(\mathcal{G}(\mathrm{Ext}^{1}(1_{\mathcal{C}},1_{\mathcal{C}})),\bigoplus_{i=0}^{n-1}\mathrm{Ext}^{1}(1_{\mathcal{C}},1_{\mathcal{C}})^{\otimes i}).\\
\end{tikzcd}
\end{equation}
Considering induced morphisms between cohomology in degree $1$ and inflation, we obtain a commutative diagram from \eqref{ad}
\begin{equation}\label{n}
\begin{tikzcd}[row sep=large,column sep=small]
&\mathrm{Ext}^{1}(\mathcal{E}_{n+1},1_{\mathcal{C}})\ar[r,"\tau_{n+1}"]\ar[d,"inflation"]&\mathrm{Ext}^{1}(\mathcal{E}_{n},1_{\mathcal{C}})\otimes\mathrm{Ext}^{1}(1_{\mathcal{C}},1_{\mathcal{C}})\ar[d,equal]\\
&\mathrm{H}^{1}(\mathcal{G}(\mathrm{Ext}^{1}(1_{\mathcal{C}},1_{\mathcal{C}})),\omega(\mathcal{E}_{n+1}^{\vee}))\ar[d,"\mathrm{H}^{1}(\zeta'_{\leq n+1})"]\ar[r]&\mathrm{Ext}^{1}(\mathcal{E}_{n},1_{\mathcal{C}})\otimes\mathrm{Ext}^{1}(1_{\mathcal{C}},1_{\mathcal{C}})\ar[d,"\zeta_{n}'\otimes id"] \\
&\mathrm{H}^{1}(\mathcal{G}(\mathrm{Ext}^{1}(1_{\mathcal{C}},1_{\mathcal{C}})),\bigoplus_{i=0}^{n}\mathrm{Ext}^{1}(1_{\mathcal{C}},1_{\mathcal{C}})^{\otimes i})\ar[r]&\mathrm{Ext}^{1}(1_{\mathcal{C}},1_{\mathcal{C}})^{\otimes n+1}.\\
\end{tikzcd}
\end{equation}
Replacing $n$ by $n+1$ in \eqref{n+1} and combining it with \eqref{n}, for each $n\geq 1$ we obtain a commutative diagram
\[\begin{tikzcd}[row sep=large]
&\mathrm{Ext}^{1}(\mathcal{E}_{n+1},1_{\mathcal{C}})\ar[r,"\tau_{n+1}"]\ar[d,"\zeta_{n+1}'"]&\mathrm{Ext}^{1}(\mathcal{E}_{n},1_{\mathcal{C}})\otimes\mathrm{Ext}^{1}(1_{\mathcal{C}},1_{\mathcal{C}})\ar[d,"\zeta_{n}'\otimes id"] \\
&\mathrm{Ext}^{1}(1_{\mathcal{C}},1_{\mathcal{C}})^{\otimes n+1} \ar[r,equal] &\mathrm{Ext}^{1}(1_{\mathcal{C}},1_{\mathcal{C}})^{\otimes n+1},
\end{tikzcd}
\]
and by iterating we have 
\[\begin{tikzcd}[column sep=small, row sep=large]
&\mathrm{Ext}^{1}(\mathcal{E}_{n+1},1_{\mathcal{C}})\ar[r,"\tau_{n+1}"]\ar[d,"\zeta_{n+1}'"]&\mathrm{Ext}^{1}(\mathcal{E}_{n},1_{\mathcal{C}})\otimes\mathrm{Ext}^{1}(1_{\mathcal{C}},1_{\mathcal{C}})\ar[d,"\zeta_{n}'\otimes id"]\ar[r,"\tau_{n}\otimes id"] &\mathrm{Ext}^{1}(\mathcal{E}_{n-1},1_{\mathcal{C}})\otimes\mathrm{Ext}^{1}(1_{\mathcal{C}},1_{\mathcal{C}})^{\otimes 2} \ar[d,"\zeta_{n-1}'\otimes id^{\otimes 2}"]\ar[r] &... \\
&\mathrm{Ext}^{1}(1_{\mathcal{C}},1_{\mathcal{C}})^{\otimes n+1} \ar[r,equal]&\mathrm{Ext}^{1}(1_{\mathcal{C}},1_{\mathcal{C}})^{\otimes n+1} \ar[r,equal] &\mathrm{Ext}^{1}(1_{\mathcal{C}},1_{\mathcal{C}})^{\otimes n+1}\ar[r,equal] &... 
\end{tikzcd}
\]
from which we conclude. 
\end{proof}
\begin{lem}
\label{sub_hopf}
Let $J$ be a subspace of $V^{\otimes 2}$. We assume that all symmetric tensors $v\otimes v$, $v\in V$, are contained in $J$. Define $T_{0}=k$ and $T_{1}=V$. For $n\geq 2$, we define $T_{n}$ inductively to be a subspace of $T_{n-1}\otimes V$, as the kernel of the morphism
\[\begin{tikzcd}[column sep=small] &T_{n-1}\otimes V\ar[r,hook] &T_{n-2}\otimes V \otimes V \ar[r]&T_{n-2}\otimes (V^{\otimes 2}/J)\end{tikzcd}\]
by taking quotient $V^{\otimes 2}\twoheadrightarrow V^{\otimes 2}/J$ for the last two tensor factors. Then  $\bigoplus_{n\geq 0}T_{n}$ has a Hopf algebra structure inherited from $\mathcal{H}(V)$, denoted by $\mathcal{H}(V,J)$.  
\end{lem}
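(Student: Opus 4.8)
My plan is to pass to the graded dual, where the statement becomes the assertion that a certain two-sided ideal generated in degree two is a Hopf ideal. Since $\mathcal{H}(V)=\bigoplus_{n\geq 0}V^{\otimes n}$ is a graded, connected Hopf algebra with finite-dimensional homogeneous pieces, its graded dual is again such a Hopf algebra, the double dual recovers $\mathcal{H}(V)$, and a graded subspace $T\subseteq\mathcal{H}(V)$ is a sub-Hopf-algebra exactly when its annihilator $T^{\perp}$ is a Hopf ideal of $\mathcal{H}(V)^{\vee}$. The first step is to identify this dual: dualizing the deconcatenation coproduct gives the concatenation product, so $\mathcal{H}(V)^{\vee}=T(V^{\vee})$ as algebras; dualizing the shuffle product shows that each generator $a\in V^{\vee}$ is primitive, so $T(V^{\vee})$ carries its standard Hopf structure (that of the enveloping algebra of the free Lie algebra on $V^{\vee}$), with antipode acting by $-1$ on $V^{\vee}$.

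The second step is to make the spaces $T_{n}$ explicit. A short induction on $n$ — using that tensoring over a field is exact, so that kernels commute with $-\otimes V$ and with $-\otimes J$ — yields
\[
T_{n}=\bigcap_{i=1}^{n-1}V^{\otimes(i-1)}\otimes J\otimes V^{\otimes(n-1-i)}\ \subseteq\ V^{\otimes n},
\]
i.e. $T_{n}$ consists of the tensors all of whose consecutive length-two windows lie in $J$. Taking annihilators degree by degree, the degree-$n$ part of $T^{\perp}$ is $\sum_{i=1}^{n-1}(V^{\vee})^{\otimes(i-1)}\otimes J^{\perp}\otimes(V^{\vee})^{\otimes(n-1-i)}$; that is, $T^{\perp}$ is precisely the two-sided ideal $I$ of $T(V^{\vee})$ generated by $J^{\perp}\subseteq(V^{\vee})^{\otimes 2}$. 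It then remains to show that $I$ is a Hopf ideal.

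Here the hypothesis on $J$ enters. From $\mathrm{Sym}^{2}V\subseteq J$ we get $J^{\perp}\subseteq(\mathrm{Sym}^{2}V)^{\perp}=\wedge^{2}(V^{\vee})$ inside $(V^{\vee})^{\otimes 2}$, and $\wedge^{2}(V^{\vee})$ is spanned by the tensors $a\otimes b-b\otimes a$. Each of these is primitive in $T(V^{\vee})$: expanding $\Delta(ab-ba)$ with $a,b$ primitive, the mixed terms cancel. Hence $J^{\perp}$ consists of primitive elements, and the two-sided ideal generated by a set of primitives is automatically a Hopf ideal — the counit kills it for degree reasons; for $x$ primitive, $\Delta(uxv)=\sum u_{(1)}xv_{(1)}\otimes u_{(2)}v_{(2)}+\sum u_{(1)}v_{(1)}\otimes u_{(2)}xv_{(2)}\in I\otimes T(V^{\vee})+T(V^{\vee})\otimes I$; and $S(x)=-x\in J^{\perp}$ together with $S$ being an anti-automorphism gives $S(I)\subseteq I$. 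Therefore $I$ is a Hopf ideal, $T=I^{\perp}$ is a sub-Hopf-algebra of $\mathcal{H}(V)$, and the product, coproduct, unit, counit and antipode of $\mathcal{H}(V)$ restrict to $\mathcal{H}(V,J):=T$.

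The step needing the most care is the identification $T^{\perp}=I$, which rests entirely on the inductive two-window description of $T_{n}$; once that bookkeeping is done, the rest is formal. For orientation: the two-window description makes $\Delta(T_{n})\subseteq\bigoplus_{i}T_{i}\otimes T_{n-i}$ transparent, and since $V^{\otimes 2}=\mathrm{Sym}^{2}V\oplus\wedge^{2}V$ with the flip acting by $\pm1$ on the two summands, the subspace $J=\mathrm{Sym}^{2}V\oplus(J\cap\wedge^{2}V)$ is flip-stable, so the reversal antipode preserves $T_{n}$; the only feature not visible at the level of windows is the shuffle-stability $T_{m}\,\shuffle\,T_{n}\subseteq T_{m+n}$, and that is exactly what the dual computation above delivers.
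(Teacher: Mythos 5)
The paper states Lemma \ref{sub_hopf} without any proof, so there is no argument of the author's to compare yours against; judged on its own, your proof is correct and complete. The graded-duality route is a clean way to organize it: the identification of the graded dual of the shuffle Hopf algebra with the concatenation Hopf algebra $T(V^{\vee})$ on primitive generators is standard, your inductive ``two-window'' description $T_{n}=\bigcap_{i}V^{\otimes(i-1)}\otimes J\otimes V^{\otimes(n-1-i)}$ is right (the only slightly hidden step, that $(T_{n-1}\otimes V)\cap(T_{n-2}\otimes J)=(T_{n-1}\otimes V)\cap(V^{\otimes(n-2)}\otimes J)$, follows since $T_{n-1}\subseteq T_{n-2}\otimes V$), and passing to annihilators degree by degree legitimately identifies $T^{\perp}$ with the two-sided ideal generated by $J^{\perp}$. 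The hypothesis on symmetric tensors is used exactly where it must be: in characteristic $0$ the span of the $v\otimes v$ is all of $\mathrm{Sym}^{2}V$, so $J^{\perp}\subseteq\wedge^{2}(V^{\vee})$ consists of commutators of primitives, hence of primitives, and an ideal generated by primitives is a Hopf ideal. One peripheral remark: the antipode of $\mathcal{H}(V)$ as written in Lemma \ref{free_uni} is missing the sign $(-1)^{n}$ (without it the antipode axiom fails already on $V$); your dual computation $S(a)=-a$ on $V^{\vee}$ corresponds to the correctly signed reversal, and since $T_{n}$ is stable under reversal with or without the sign, this does not affect your argument.
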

\begin{proof}
This can be checked directly.
\end{proof}
To get a better approximation of $\pi_{1}^{nilp}(\mathcal{C},\omega)$, we place ourselves in the following situation.
\begin{noco}
Embed $\mathcal{C}$ into an another $k$-linear abelian tensor category $\mathcal{D}$ fully faithfully, such that
\begin{enumerate}
\item $\mathcal{C}$ is closed under taking subquotients and extensions in $\mathcal{D}$.
\item $\mathcal{D}$ has enough injectives. 
\end{enumerate}
The $\mathrm{Ext}$ groups $\mathrm{Ext}^{i}$, $i\geq 0$ can be defined either as the group of Yoneda extensions in $\mathcal{D}$, or defined as derived functors.
\end{noco}
\begin{lem}\label{cup}
Apply $\mathrm{Hom}(\cdot,1_{\mathcal{C}})$ to the short exact sequence \eqref{univ_def} defining the universal extension $U(\mathcal{E})$ of an object $\mathcal{E}\in \mathcal{C}$. For each $i$, the resulting connecting homomorphism 
\[\mathrm{Ext}^{i}(\mathcal{E},1_{\mathcal{C}})\otimes\mathrm{Ext}^{1}(1_{\mathcal{C}},1_{\mathcal{C}}) \rightarrow \mathrm{Ext}^{i+1}(\mathcal{E},1_{\mathcal{C}})\]
in the associated long exact sequence is given by the Yoneda cup product, and differs from the cup product $\cup$ in the sense of derived functors by a sign $(-1)^{i}$. 
\end{lem}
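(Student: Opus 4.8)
The plan is to reduce the statement to the classical description of a boundary map in a long exact $\mathrm{Ext}$-sequence, after first pinning down the class of the universal extension. Concretely, \eqref{univ_def} is classified by an element of
\[
\mathrm{Ext}^{1}\!\bigl(\mathcal{E},\,\mathrm{Ext}^{1}(\mathcal{E},1_{\mathcal{C}})^{\vee}\otimes 1_{\mathcal{C}}\bigr)\;\cong\;\mathrm{Ext}^{1}(\mathcal{E},1_{\mathcal{C}})\otimes\mathrm{Ext}^{1}(\mathcal{E},1_{\mathcal{C}})^{\vee}\;=\;\mathrm{End}_{k}\!\bigl(\mathrm{Ext}^{1}(\mathcal{E},1_{\mathcal{C}})\bigr),
\]
and the universal property of Lemma \ref{univ_ext} (equivalently, the assertion in Lemma \ref{univ_ext}(2) that the degree-$0$ boundary map is the identity) says precisely that this class is $\mathrm{id}_{\mathrm{Ext}^{1}(\mathcal{E},1_{\mathcal{C}})}$, i.e.\ the canonical element $\sum_{\alpha}e_{\alpha}\otimes e_{\alpha}^{\vee}$ attached to dual bases $\{e_{\alpha}\}$ and $\{e_{\alpha}^{\vee}\}$ of $\mathrm{Ext}^{1}(\mathcal{E},1_{\mathcal{C}})$ and its dual.

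Next I would invoke the general principle: for any short exact sequence $0\to A\to B\to C\to 0$ in $\mathcal{D}$ with class $\xi\in\mathrm{Ext}^{1}(C,A)$, the connecting homomorphism $\mathrm{Ext}^{i}(A,1_{\mathcal{C}})\xrightarrow{\ \delta\ }\mathrm{Ext}^{i+1}(C,1_{\mathcal{C}})$ attached to $\mathrm{Hom}(-,1_{\mathcal{C}})$ is $(-1)^{i}$ times Yoneda composition with $\xi$. One way to see this is to pass to $\mathrm{D}^{+}(\mathcal{D})$ (legitimate since $\mathcal{D}$ has enough injectives): the short exact sequence gives a distinguished triangle $A\to B\to C\xrightarrow{\ \xi\ }A[1]$, the functor $\mathrm{RHom}(-,1_{\mathcal{C}})$ carries it to a distinguished triangle whose boundary map is composition with $\xi$, and taking $\mathrm{H}^{i}$ yields Yoneda composition with $\xi$ up to the Koszul sign $(-1)^{i}$ incurred by sliding the shift $[i]$ past the rotation of the triangle. (Alternatively one argues directly with splices of Yoneda extensions, which is the classical formulation of the same fact.)

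Applying this with $A=\mathrm{Ext}^{1}(\mathcal{E},1_{\mathcal{C}})^{\vee}\otimes 1_{\mathcal{C}}$, $C=\mathcal{E}$ and $\xi=\mathrm{id}$, and using that $A$ is free, so that canonically $\mathrm{Ext}^{i}(A,1_{\mathcal{C}})=\mathrm{Ext}^{1}(\mathcal{E},1_{\mathcal{C}})\otimes\mathrm{Ext}^{i}(1_{\mathcal{C}},1_{\mathcal{C}})$, Yoneda composition with the canonical element $\sum_{\alpha}e_{\alpha}\otimes e_{\alpha}^{\vee}$ simply contracts the $\mathrm{Ext}^{1}(\mathcal{E},1_{\mathcal{C}})^{\vee}$-slot against the $e_{\alpha}$'s; thus $\delta$ becomes $(-1)^{i}$ times the pairing sending $e\otimes\psi$ (with $e\in\mathrm{Ext}^{1}(\mathcal{E},1_{\mathcal{C}})$, $\psi\in\mathrm{Ext}^{i}(1_{\mathcal{C}},1_{\mathcal{C}})$) to the Yoneda product $\psi\cdot e\in\mathrm{Ext}^{i+1}(\mathcal{E},1_{\mathcal{C}})$, which is the Yoneda cup product. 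To compare with the cup product of derived functors, one uses the canonical identifications $\mathcal{E}=\mathcal{E}\otimes 1_{\mathcal{C}}$ and $1_{\mathcal{C}}=1_{\mathcal{C}}\otimes 1_{\mathcal{C}}$ together with the standard fact that the Yoneda product and the derived-functor cup product agree up to the usual degree-dependent sign; combining that sign with the $(-1)^{i}$ already produced gives the stated discrepancy.

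The only genuinely delicate point, and the one I expect to be the main obstacle, is the sign bookkeeping: there are three sources of signs — the boundary map versus Yoneda composition, the Yoneda product versus the derived-functor cup product, and the shifts in $\mathrm{D}^{+}(\mathcal{D})$ — and the substance of the lemma is that they assemble into exactly $(-1)^{i}$. The rest of the argument is formal, and for the applications made of the lemma later it is in any case only the identification of the pairing (up to sign) that matters.
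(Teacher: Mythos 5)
Your overall strategy coincides with the paper's, which disposes of the first assertion with the words ``by construction'' (i.e., the connecting homomorphism of a long exact $\mathrm{Ext}$-sequence is splicing with the Yoneda class of the short exact sequence, and by Lemma \ref{univ_ext}(2) the class of \eqref{univ_def} is the canonical element of $\mathrm{Ext}^{1}(\mathcal{E},1_{\mathcal{C}})\otimes\mathrm{Ext}^{1}(\mathcal{E},1_{\mathcal{C}})^{\vee}$) and then cites Chapitre \uppercase\expandafter{\romannumeral3} 3.2.5 of the reference for the sign comparing the Yoneda product with the derived-functor cup product. Your identification of the extension class with $\mathrm{id}_{\mathrm{Ext}^{1}(\mathcal{E},1_{\mathcal{C}})}$ and the ensuing contraction argument are exactly the intended content of ``by construction,'' so the skeleton of your argument is the right one.

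The place where your write-up does not close is the sign bookkeeping, and the problem is not merely that a computation is omitted: your ``general principle'' asserts that the connecting homomorphism is $(-1)^{i}$ times Yoneda composition with $\xi$, which, taken at face value, contradicts the first assertion of the lemma, namely that the connecting homomorphism \emph{is} the Yoneda cup product with no sign. The accounting the lemma intends is that $\delta$ equals the Yoneda pairing on the nose (this is automatic when the connecting map is realized by splicing of Yoneda extensions), and the entire $(-1)^{i}$ lives in the second comparison, between the Yoneda product and the derived-functor cup product. Your closing sentence, ``combining that sign with the $(-1)^{i}$ already produced gives the stated discrepancy,'' leaves two unspecified signs whose product is simply declared to be $(-1)^{i}$; if both signs you invoke were genuinely present they would compound to something you have not computed, and in any case your version of the first step would falsify the first half of the statement. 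To repair this, either work throughout with the splicing description of $\delta$ (so that no sign enters in the first step) and then quote or verify the Yoneda-versus-derived-functor sign for the second step, or fix a single derived-category convention and track both signs explicitly rather than asserting their product. (For the use made of the lemma in Proposition \ref{mor_app} only $i=1$ and only the kernel of the pairing matter, so the sign is harmless downstream, but the lemma as stated does commit to a specific sign and your proof should too.)
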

\begin{proof}
The first part is by construction. The second part follows from Chapitre \uppercase\expandafter{\romannumeral3} 3.2.5 \cite{AST_1996__239__R1_0}.
\end{proof}
\begin{noco}
In the general construction of Lemma \ref{sub_hopf}, we take $V=\mathrm{Ext}^{1}(1_{\mathcal{C}},1_{\mathcal{C}})$ and $J$ to be the kernel of the cup product 
\[\cup:\mathrm{Ext}^{1}(1_{\mathcal{C}},1_{\mathcal{C}})\otimes \mathrm{Ext}^{1}(1_{\mathcal{C}},1_{\mathcal{C}}) \rightarrow \mathrm{Ext}^{2}(1_{\mathcal{C}},1_{\mathcal{C}}).\] The resulting Hopf algebra $\mathcal{H}(V,J)$ will be denoted by $\mathcal{H}(\mathrm{Ext}^{\leq 2}(1_{\mathcal{C}},1_{\mathcal{C}}))$. 
\end{noco}
\begin{prop}
\label{mor_app}
There are natural inclusions $\xi_{n}$
\[
\begin{tikzcd}
&\mathrm{Ext}^{1}(\mathcal{E}_{n},1_{\mathcal{C}}) \ar[rr, bend left=20, "\zeta_{n}",hook]\ar[r,hook,"\xi_{n}"] &T_{n} \ar[r,hook] &\mathrm{Ext}^{1}(1_{\mathcal{C}},1_{\mathcal{C}})^{\otimes n}
\end{tikzcd}
\]
for $n\geq 1$. (The notation $\zeta_{n}$ is from \eqref{induc'} and $T_{n}$ is from Proposition \ref{sub_hopf}.)
\end{prop}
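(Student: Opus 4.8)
The plan is an induction on $n$, proving the slightly sharper statement that $\xi_{n}$ is simply the corestriction of $\zeta_{n}$ along the canonical inclusion $T_{n}\hookrightarrow\mathrm{Ext}^{1}(1_{\mathcal{C}},1_{\mathcal{C}})^{\otimes n}$; once $\xi_{n}$ is produced the commutativity of the triangle in the statement is automatic, and naturality follows from that of $\zeta_{n}$ (built from the $\tau$'s in \eqref{induc'}) and of $T_{n}\hookrightarrow V^{\otimes n}$ (from Lemma \ref{sub_hopf}). Write $V=\mathrm{Ext}^{1}(1_{\mathcal{C}},1_{\mathcal{C}})$, $A_{n}=\mathrm{Ext}^{1}(\mathcal{E}_{n},1_{\mathcal{C}})$, $B_{n}=\mathrm{Ext}^{2}(\mathcal{E}_{n},1_{\mathcal{C}})$, and $\bar A_{n}=\zeta_{n}(A_{n})\subseteq V^{\otimes n}$; note $\zeta_{n}$ is injective, being a composite of the injections $\tau_{m}$ tensored with identities.

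First I would record an explicit description of $A_{n}$. Applying $\mathrm{Hom}(\cdot,1_{\mathcal{C}})$ to the defining sequence \eqref{define_ext} (for which $\mathcal{E}_{n}=U(\mathcal{E}_{n-1})$), Lemma \ref{univ_ext}(2) says the connecting map $\mathrm{Hom}(A_{n-1}^{\vee}\otimes 1_{\mathcal{C}},1_{\mathcal{C}})=A_{n-1}\to A_{n-1}$ is the identity; hence $A_{n-1}\to A_{n}$ is zero, $\tau_{n}$ is injective with image the kernel of the next connecting map, and by Lemma \ref{cup} that connecting map is, up to an irrelevant sign, the cup product. Thus
\[
A_{n}=\ker\!\bigl(\,\cup\colon A_{n-1}\otimes V\to B_{n-1}\,\bigr)\subseteq A_{n-1}\otimes V\quad(\text{via }\tau_{n}).
\]
For $n=1$ this reads $A_{1}=V=T_{1}$, and for $n=2$, since $A_{1}=V$, $B_{1}=\mathrm{Ext}^{2}(1_{\mathcal{C}},1_{\mathcal{C}})$, and the displayed cup product is then the defining map of $J$, we get $A_{2}=J=T_{2}$ with $\zeta_{2}=\tau_{2}$ the inclusion $J\hookrightarrow V^{\otimes 2}$. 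These are the base cases.

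For the inductive step ($n\geq 3$) I would use functoriality of the cup product along the inclusion $\iota\colon A_{n-2}^{\vee}\otimes 1_{\mathcal{C}}\hookrightarrow\mathcal{E}_{n-1}$ occurring in \eqref{define_ext} for $\mathcal{E}_{n-1}$. On $\mathrm{Ext}^{1}$ the pullback $\iota^{*}$ is exactly $\tau_{n-1}\colon A_{n-1}\hookrightarrow A_{n-2}\otimes V$ (it is read off the very long exact sequence defining $\tau_{n-1}$), while on $\mathrm{Ext}^{2}$ it is the restriction $B_{n-1}\to\mathrm{Ext}^{2}(A_{n-2}^{\vee}\otimes 1_{\mathcal{C}},1_{\mathcal{C}})=A_{n-2}\otimes\mathrm{Ext}^{2}(1_{\mathcal{C}},1_{\mathcal{C}})$, and under these identifications the cup product on the free object $A_{n-2}^{\vee}\otimes 1_{\mathcal{C}}$ is $\mathrm{id}_{A_{n-2}}\otimes\cup$. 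Functoriality of cup products therefore makes
\[
\begin{tikzcd}[column sep=huge]
A_{n-1}\otimes V \ar[r,"\cup"]\ar[d,"\tau_{n-1}\otimes\mathrm{id}"'] & B_{n-1}\ar[d,"\iota^{*}"]\\
A_{n-2}\otimes V\otimes V \ar[r,"\mathrm{id}_{A_{n-2}}\otimes\cup"'] & A_{n-2}\otimes\mathrm{Ext}^{2}(1_{\mathcal{C}},1_{\mathcal{C}})
\end{tikzcd}
\]
commute; since $\ker(\mathrm{id}_{A_{n-2}}\otimes\cup)=A_{n-2}\otimes J$, chasing an element of $A_{n}=\ker(\cup)$ through the square yields $A_{n}\subseteq(A_{n-1}\otimes V)\cap(A_{n-2}\otimes J)$ inside $A_{n-2}\otimes V^{\otimes 2}$, where $A_{n-1}\otimes V$ is placed via $\tau_{n-1}\otimes\mathrm{id}$. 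Unravelling \eqref{induc'} gives $\zeta_{n-1}=(\zeta_{n-2}\otimes\mathrm{id})\circ\tau_{n-1}$ and $\zeta_{n}=(\zeta_{n-2}\otimes\mathrm{id}^{\otimes 2})\circ(\tau_{n-1}\otimes\mathrm{id})\circ\tau_{n}$, so applying the injection $\zeta_{n-2}\otimes\mathrm{id}^{\otimes 2}$ to the last inclusion transports it to $\bar A_{n}\subseteq(\bar A_{n-1}\otimes V)\cap(\bar A_{n-2}\otimes J)$ inside $V^{\otimes n}$.

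To finish, by the induction hypothesis $\bar A_{n-1}\subseteq T_{n-1}$ and $\bar A_{n-2}\subseteq T_{n-2}$, so $\bar A_{n}\subseteq T_{n-1}\otimes V$ and $\bar A_{n}\subseteq T_{n-2}\otimes J$. Since $T_{n-1}\subseteq T_{n-2}\otimes V$, the defining map $T_{n-1}\otimes V\to T_{n-2}\otimes(V^{\otimes 2}/J)$ of $T_{n}$ is the restriction of $\mathrm{id}_{T_{n-2}}\otimes(V^{\otimes 2}\twoheadrightarrow V^{\otimes 2}/J)$, which kills $T_{n-2}\otimes J$; hence $\bar A_{n}\subseteq T_{n}$, which is the desired inclusion $\xi_{n}$, completing the induction. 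The one genuinely delicate point is the inductive step: confirming that $\iota^{*}$ on $\mathrm{Ext}^{1}$ really is $\tau_{n-1}$ and that the cup product is compatible with this pullback under the chosen identifications of $\mathrm{Ext}$-groups of free objects (signs being harmless since only kernels enter), which is where Lemma \ref{cup} and the functoriality statement of Chapitre \uppercase\expandafter{\romannumeral3} 3.2.5 of \cite{AST_1996__239__R1_0} do the work; the bookkeeping with the $\zeta$'s, $\tau$'s and $T_{n}$'s is routine.
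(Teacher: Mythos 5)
Your proposal is correct and follows essentially the same route as the paper: induction with base cases $n=1,2$ identified with $V$ and $J=T_2$, and an inductive step resting on the commutative square that compares the cup-product connecting map for the sequence defining $\mathcal{E}_{n+1}$ with its restriction along the free subobject of $\mathcal{E}_{n}$, so that $\mathrm{Ext}^{1}(\mathcal{E}_{n},1_{\mathcal{C}})$ lands in $(T_{n-1}\otimes V)\cap(T_{n-2}\otimes J)=T_{n}$. You merely spell out a few points the paper leaves implicit (injectivity of the $\tau_m$ via Lemma \ref{univ_ext}(2), and the identification of the vertical maps in diagram \eqref{diagram} as pullback along $\iota$).
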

\begin{proof}
In the sequel, the morphism $\tau_{n}$ is the one from \eqref{induc}. \par
Apply induction on $n$. It is clear for $n=1$. \par
For $n=2$, applying $\mathrm{Hom}(\cdot,1_{\mathcal{C}})$ to the exact sequence defining $\mathcal{E}_{2}$, we have an exact sequence
\[
\begin{tikzcd}
&0\ar[r] &\mathrm{Ext}^{1}(\mathcal{E}_{2},1_{\mathcal{C}}) \ar[r,"\tau_{2}"] &\mathrm{Ext}^{1}(\mathcal{E}_{1},1_{\mathcal{C}})\otimes \mathrm{Ext}^{1}(1_{\mathcal{C}},1_{\mathcal{C}}) \ar[r,"-\cup"] &\mathrm{Ext}^{2}(\mathcal{E}_{1},1_{\mathcal{C}}).
\end{tikzcd}
\]
Note that $\mathcal{E}_{1}$ is $1_{\mathcal{C}}$, so $\mathrm{Ext}^{1}(\mathcal{E}_{2},1_{\mathcal{C}})$ is exactly defined as the kernel of the cup product, which is $T_{2}$. \par 
For $n\geq 2$, we have the following commutative diagram whose horizontal and vertical sequences are all exact:
\label{diagram}
\begin{equation}
\label{diagram}
\begin{tikzcd}[column sep=small,row sep=large]
&{}&{} &{} &\mathrm{Ext}^{2}(\mathcal{E}_{n-1},1_{\mathcal{C}}) \ar[d] \\
&0\ar[r]&\mathrm{Ext}^{1}(\mathcal{E}_{n+1},1_{\mathcal{C}}) \ar[r,"\tau_{n+1}"] &\mathrm{Ext}^{1}(\mathcal{E}_{n},1_{\mathcal{C}})\otimes \mathrm{Ext}^{1}(1_{\mathcal{C}},1_{\mathcal{C}}) \ar[d,hook,"\tau_{n}\otimes id"]\ar[r,"-\cup"] & \mathrm{Ext}^{2}(\mathcal{E}_{n},1_{\mathcal{C}}) \ar[d]  \\
&{}&{}&\mathrm{Ext}^{1}(\mathcal{E}_{n-1},1_{\mathcal{C}})\otimes\mathrm{Ext}^{1}(1_{\mathcal{C}},1_{\mathcal{C}})\otimes\mathrm{Ext}^{1}(1_{\mathcal{C}},1_{\mathcal{C}}) \ar[r,"-id\otimes \cup"]& \mathrm{Ext}^{1}(\mathcal{E}_{n-1},1_{\mathcal{C}})\otimes \mathrm{Ext}^{2}(1_{\mathcal{C}},1_{\mathcal{C}}) .\\
\end{tikzcd}
\end{equation}
Here the horizontal (resp. vertical) exact sequence is a part of the long exact sequence obtained by applying $\mathrm{Hom}(\cdot,1_{\mathcal{C}})$ to the short exact sequence defining $\mathcal{E}_{n+1}$ (resp. $\mathcal{E}_{n}$). Now $\mathrm{Ext}^{1}(\mathcal{E}_{n+1},1_{\mathcal{C}})$ is canonically contained in the kernel of the composition going from $\mathrm{Ext}^{1}(\mathcal{E}_{n},1_{\mathcal{C}})\otimes \mathrm{Ext}^{1}(1_{\mathcal{C}},1_{\mathcal{C}})$ to $\mathrm{Ext}^{1}(\mathcal{E}_{n-1},1_{\mathcal{C}})\otimes \mathrm{Ext}^{2}(1_{\mathcal{C}},1_{\mathcal{C}})$, which can be identified as a subspace of $T_{n+1}$ by induction hypothesis. 
\end{proof}
\begin{cor}
\label{non_qua}
There is a non-canonical inclusion \begin{equation}\label{refine}\xi': \mathcal{O}(\pi_{1}^{nilp}(\mathcal{C}),\omega)\hookrightarrow \mathcal{H}(\mathrm{Ext}^{\leq 2}(1_{\mathcal{C}},1_{\mathcal{C}}))\end{equation} between Hopf algebras such that, $\xi_{n}$ in Proposition \ref{mor_app} are isomorphisms for all $n\geq 1$ if and only if $\xi'$ from \eqref{refine} is an isomorphism.
\end{cor}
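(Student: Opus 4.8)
The plan is to produce $\xi'$ by corestricting the (non-canonical) inclusion $\zeta'$ of \eqref{noncan} along the sub-Hopf-algebra $\mathcal{H}(\mathrm{Ext}^{\leq 2}(1_{\mathcal{C}},1_{\mathcal{C}}))\hookrightarrow\mathcal{H}(\mathrm{Ext}^{1}(1_{\mathcal{C}},1_{\mathcal{C}}))$ of Lemma \ref{sub_hopf}, so that the statement reduces to two things: first, that for a suitable choice of the lifting \eqref{lift_lie} the map $\zeta'$ actually takes values in $\mathcal{H}(\mathrm{Ext}^{\leq 2})$; and second, an elementary bookkeeping turning the assertion about $\xi'$ into one about its associated graded pieces. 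Throughout I would use that on $\mathcal{H}(\mathrm{Ext}^{1})$ and on its sub-Hopf-algebra $\mathcal{H}(\mathrm{Ext}^{\leq 2})$ the conilpotency filtration is the filtration by total tensor degree, so that $\mathrm{gr}_{n}\mathcal{H}(\mathrm{Ext}^{1})=\mathrm{Ext}^{1}(1_{\mathcal{C}},1_{\mathcal{C}})^{\otimes n}$ and $\mathrm{gr}_{n}\mathcal{H}(\mathrm{Ext}^{\leq 2})=T_{n}$.

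For the first point: by Lemma \ref{compare_grad} the morphism induced by $\zeta'$ on the $n$-th conilpotency graded piece is $\zeta'_{n}=\zeta_{n}$, and by Proposition \ref{mor_app} this factors through $\xi_{n}\colon\mathrm{Ext}^{1}(\mathcal{E}_{n},1_{\mathcal{C}})\hookrightarrow T_{n}$ followed by $T_{n}\hookrightarrow\mathrm{Ext}^{1}(1_{\mathcal{C}},1_{\mathcal{C}})^{\otimes n}$. Hence $\mathrm{gr}(\zeta')$ already lands in $\bigoplus_{n}T_{n}=\mathrm{gr}\,\mathcal{H}(\mathrm{Ext}^{\leq 2})$. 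To upgrade this to the statement that $\zeta'$ itself has image in $\mathcal{H}(\mathrm{Ext}^{\leq 2})$ I would identify, via Corollary \ref{comparison}, $\omega(\mathcal{E}_{n}^{\vee})$ with $\mathcal{C}_{n-1}\mathcal{O}(\pi_{1}^{nilp}(\mathcal{C},\omega))$, and then compare against the iterated universal extension of $\mathrm{Rep}^{f}\mathcal{G}(\mathrm{Ext}^{\leq 2}(1_{\mathcal{C}},1_{\mathcal{C}}))$ through \eqref{intersection_univ} and \eqref{intersection_conilpotency}, so that the inclusion $\omega(\mathcal{E}_{n}^{\vee})\subseteq\bigoplus_{i\leq n-1}T_{i}$ is obtained by induction on $n$ from the graded statement together with the fact that $\zeta'$ is a morphism of coaugmented coalgebras. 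This produces a Hopf algebra inclusion $\xi'\colon\mathcal{O}(\pi_{1}^{nilp}(\mathcal{C},\omega))\hookrightarrow\mathcal{H}(\mathrm{Ext}^{\leq 2}(1_{\mathcal{C}},1_{\mathcal{C}}))$ with $\mathrm{gr}_{n}\xi'=\xi_{n}$ for all $n$.

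For the second point: $\xi'$, being a Hopf algebra morphism, respects the conilpotency filtrations (Lemma \ref{conil}), and both $\mathcal{O}(\pi_{1}^{nilp}(\mathcal{C},\omega))$ and $\mathcal{H}(\mathrm{Ext}^{\leq 2}(1_{\mathcal{C}},1_{\mathcal{C}}))$ are exhaustively filtered with finite-dimensional graded pieces. An injective filtered map between such objects is an isomorphism if and only if its associated graded is, by an easy induction on the filtration degree; and since $\mathrm{gr}_{n}\xi'=\xi_{n}$ is already known to be injective, $\mathrm{gr}(\xi')$ is an isomorphism exactly when $\dim\mathrm{Ext}^{1}(\mathcal{E}_{n},1_{\mathcal{C}})=\dim T_{n}$ for all $n$, i.e. exactly when every $\xi_{n}$ is an isomorphism. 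This gives the asserted equivalence.

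The main obstacle is the ``degree $\to$ total object'' step inside the first point: the fact that $\mathrm{gr}(\zeta')$ lands in $\mathrm{gr}\,\mathcal{H}(\mathrm{Ext}^{\leq 2})$ does not, by itself, force $\zeta'$ to land in $\mathcal{H}(\mathrm{Ext}^{\leq 2})$, since $\mathcal{O}(\pi_{1}^{nilp}(\mathcal{C},\omega))$ is only filtered and not graded. Writing the lifting \eqref{lift_lie} as $\lambda\colon\mathrm{Ext}^{1}(1_{\mathcal{C}},1_{\mathcal{C}})^{\vee}\to\mathfrak{g}$, one can reformulate the desired containment of $\zeta'$ in $\mathcal{H}(\mathrm{Ext}^{\leq 2})$ as the requirement that $\sum_{i}[\lambda(w_{i}),\lambda(w_{i}')]=0$ in $\mathfrak{g}$ for every $\sum_{i}w_{i}\wedge w_{i}'$ lying in $\ker(\bar{\cup})^{\perp}$, where $\bar{\cup}$ is the cup product restricted to $\wedge^{2}\mathrm{Ext}^{1}(1_{\mathcal{C}},1_{\mathcal{C}})$; thus the crux is to pick the lifting in \eqref{lift_lie} compatibly with the quadratic relations cut out by the cup product, and this is precisely the non-canonical input of the construction. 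Granting such a choice, the remaining inductive verifications are routine, using the hypothesis $\mathrm{Sym}^{2}\mathrm{Ext}^{1}(1_{\mathcal{C}},1_{\mathcal{C}})\subseteq J$ from Lemma \ref{sub_hopf} and the coassociativity identities for the iterated coproduct.
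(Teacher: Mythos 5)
Your overall route is the same as the paper's: restrict $\zeta'$ from \eqref{noncan} along $\mathcal{H}(\mathrm{Ext}^{\leq 2}(1_{\mathcal{C}},1_{\mathcal{C}}))\subset\mathcal{H}(\mathrm{Ext}^{1}(1_{\mathcal{C}},1_{\mathcal{C}}))$ and then read off the ``iff'' on associated graded pieces. Your second point is correct and complete: an injective morphism of exhaustively filtered spaces with finite-dimensional graded pieces is an isomorphism exactly when its associated graded is, and $\mathrm{gr}_{n}\xi'=\xi_{n}$ handles the equivalence. The problem is entirely in your first point, and you have in fact diagnosed it yourself: knowing that $\mathrm{gr}_{n}\zeta'=\zeta_{n}$ lands in $T_{n}$ (Lemma \ref{compare_grad} plus Proposition \ref{mor_app}) does \emph{not} imply that $\zeta'$ lands in $\bigoplus_{n}T_{n}$, because $\mathcal{O}(\pi_{1}^{nilp}(\mathcal{C},\omega))$ sits inside the graded coalgebra $\mathcal{H}(\mathrm{Ext}^{1}(1_{\mathcal{C}},1_{\mathcal{C}}))$ only as a filtered, not graded, subspace. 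Your reformulation of the missing statement is the right one: writing the lifting \eqref{lift_lie} as $\lambda$, one needs $\sum_{i}[\lambda(w_{i}),\lambda(w_{i}')]=0$ in $\mathfrak{g}$ for every $\sum_{i}w_{i}\wedge w_{i}'\in J^{\perp}$. But you then write ``granting such a choice, the remaining verifications are routine'' without producing the choice, and this is precisely the non-trivial content of the statement. What the graded input gives for free is only that $\sum_{i}[\lambda(w_{i}),\lambda(w_{i}')]$ lies in $\Gamma^{3}\mathfrak{g}$; forcing it to vanish exactly is a genuine further condition on $\mathfrak{g}$, and modifying $\lambda$ by terms in $\Gamma^{2}\mathfrak{g}$ need not achieve it. For instance, for $\mathfrak{g}$ the pro-nilpotent completion of the one-relator Lie algebra on $x,y,z,w$ with relation $[x,y]=[z,[z,w]]$ (and $\mathcal{C}$ the finite-dimensional representations of the corresponding pro-unipotent group), one finds $J^{\perp}=\langle x\wedge y\rangle$, yet no lifting makes $[\lambda(x),\lambda(y)]$ vanish, since $[z,[z,w]]$ is not congruent modulo $\Gamma^{4}\mathfrak{g}$ to anything of the form $[x,u]+[v,y]$. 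So the intermediate sentence of your first point (deducing $\omega(\mathcal{E}_{n}^{\vee})\subseteq\bigoplus_{i\leq n-1}T_{i}$ by induction from the graded statement and coassociativity) does not go through, and the gap you flag at the end is not a technicality but the heart of the matter.

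For comparison: the paper's own proof is even terser here --- it asserts the factorization of $\zeta'$ through $\mathcal{H}(\mathrm{Ext}^{\leq 2}(1_{\mathcal{C}},1_{\mathcal{C}}))$ directly from Lemma \ref{compare_grad} and Proposition \ref{mor_app}, i.e.\ it silently passes from the graded containment to the filtered one. So you have correctly located the one step that needs a real argument, and you have translated it into the correct Lie-theoretic condition on the lifting; but you have not supplied that argument, and as the example above indicates, in the stated generality no choice of lifting need exist. To make your proof (or the corollary) work one must either restrict to situations where an independent formality-type input guarantees the quadratic relations can be realized exactly (as the paper later does for bundles and connections via Hodge theory), or reformulate the corollary so that only the graded comparison $\xi_{n}$ is asserted.
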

\begin{proof}
There is an inclusion \[\zeta':\mathcal{O}(\pi_{1}^{nilp}(\mathcal{C}),\omega)\hookrightarrow \mathcal{H}(\mathrm{Ext}^{1}(1_{\mathcal{C}},1_{\mathcal{C}}))\] as that of \eqref{noncan}. By Lemma \ref{compare_grad} and Proposition \ref{mor_app}, we see a factorization 
\[
\begin{tikzcd}
&\mathcal{O}(\pi_{1}^{nilp}(\mathcal{C},\omega)) \ar[rr, bend left=20, "\zeta'",hook]\ar[r,hook,"\xi'"] &\mathcal{H}(\mathrm{Ext}^{\leq 2}(1_{\mathcal{C}},1_{\mathcal{C}})) \ar[r,hook] &\mathcal{H}(\mathrm{Ext}^{1}(1_{\mathcal{C}},1_{\mathcal{C}})).
\end{tikzcd}
\]The inclusion $\xi'$ is not canonical, but the induced morphisms between associated graded pieces with respect to the conilpotency filtration are given by those canonical $\xi_{n}$. 
\end{proof}
The following corollary will be used in the last section. 
\begin{cor}
\label{null_inter}
The inclusions $\xi_{n}$ for all $n$ are isomorphisms, if and only if, for each $n\geq 1$, the images of the cup product $\cup: \mathrm{Ext}^{1}(\mathcal{E}_{n},1_{\mathcal{C}})\otimes \mathrm{Ext}^{1}(1_{\mathcal{C}},1_{\mathcal{C}})\rightarrow \mathrm{Ext}^{2}(\mathcal{E}_{n},1_{\mathcal{C}})$ and the morphism $\mathrm{Ext}^{2}(\mathcal{E}_{n-1},1_{\mathcal{C}})\rightarrow \mathrm{Ext}^{2}(\mathcal{E}_{n},1_{\mathcal{C}})$ have intersection $0$.
\end{cor}
\begin{proof}
This follows from the argument of Proposition \ref{mor_app}, especially the diagram \eqref{diagram}. 
\end{proof}
\begin{emp}
Let $X$ be a geometrically connected smooth curve over $k$. Consider the category $\mathcal{C}$ of étale $\mathbb{Q}_{\ell}$-local systems on $X_{\bar{k}}$ (resp. the category of integrable connections on $X$). The fiber functor $\omega$ is given by a point $x\in X(k)$.\par
In \cite{10.1215/00127094-1444296} the author further assumes $X$ to be affine. The reason to restrict to affine curves is that all cohomology groups in degree $2$ vanish, and diagram \eqref{diagram} is greatly simplified. It turns out that the Tannkian group scheme of nilpotent $\mathbb{Q}_{\ell}$-local systems (resp. integrable connections) is the pro-unipotent completion of $\mathcal{G}_{d}$ over $\mathbb{Q}_{\ell}$ (resp. over $k$), with $d=\mathrm{dim}\,\mathrm{H}^{1}_{et}(X_{\bar{k}},\mathbb{Q}_{\ell})$ (resp. $d=\mathrm{dim}\,\mathrm{H}^{1}_{dR}(X)$). \par
The case of smooth proper curves is treated as Proposition 3.4 \cite{10.1215/00127094-3146817}. The argument is more tricky due to possibly non-vanishing $\mathrm{H}^{2}$ and non-vanishing cup products. Still it is possible to prove algebraically that all $\xi_{n}$ as in Proposition \ref{mor_app} are isomorphisms, and the Hopf algebra of the Tannkian group scheme of nilpotent $\mathbb{Q}_{\ell}$-local systems (resp. integrable connections) is $\mathcal{H}(\mathrm{H}^{\leq 2}_{et}(X,\mathbb{Q}_{l}))$ (resp. $\mathcal{H}(\mathrm{H}_{dR}^{\leq 2}(X)$)). \par
For a geometrically connected, smooth and projective variety $X$ with a base point $x\in X(k)$, it is still possible to prove that the Hopf algebra of the Tannkian group scheme of nilpotent $\mathbb{Q}_{\ell}$-local systems (resp. integrable connections) is $\mathcal{H}(\mathrm{H}^{\leq 2}_{et}(X,\mathbb{Q}_{l}))$ (resp. $\mathcal{H}(\mathrm{H}_{dR}^{\leq 2}(X)$)). Indeed, by comparisons among different cohomology theories, it suffices to treat $\pi_{1}^{dR}$ for a smooth projective complex variety. This is classical but the proof is Hodge theoretic (see Theorem \ref{case_deRham} below). It is not clear to the author if there is a purely algebraic proof. 
\end{emp}
The following example suggests that the inclusions $\xi_{n}$ in Proposition \ref{mor_app} may not be isomorphisms.
\begin{emp} 
Consider the Heisenberg group $\mathcal{G}$ of $3\times 3$ upper triangular unipotent matrices
\[\begin{pmatrix}
1&*&* \\
0&1&* \\
0&0&1 
\end{pmatrix}.\] Let $\mathcal{C}$ be the category of finite dimensional $k$-linear representations of $G$, $\mathcal{D}$ be the category of all $k$-linear representations, and $\omega$ be the forgetful functor to $\mathrm{Vect}_{k}$. Let $\mathcal{E}_{1}$ be the $1$-dimensional trivial representation. It is easy to see that $V=\mathrm{H}^{1}(G,\mathcal{E}_{1})$ is $2$-dimensional. Assume all $\xi_{n}$ as in Proposition \ref{mor_app} are isomorphisms, and then the Hopf algebra of $\mathcal{G}$ is $\mathcal{H}(V,J)$ where $J$ is the kernel of the cup product $\cup:V^{\otimes 2}\rightarrow \mathrm{H}^{2}(G,\mathcal{E}_{2})$. There are two possibilities:
\begin{enumerate}
\item $J$ is linearly spanned by symmetric tensors. In this case we see that $\mathcal{H}(V,J)$ is the Hopf algebra of $\mathbb{G}_{a}\times_{k}\mathbb{G}_{a}$, which is impossible as $\mathcal{G}$ is not commutative.
\item $J=V^{\otimes 2}$. In this case we see that $\mathcal{G}$ is isomorphic to $\mathcal{G}(V)$. This is impossible since $\mathcal{G}$ is an algebraic group but $\mathcal{G}(V)$ is not.
\end{enumerate}
\end{emp}

\section{A study of $\pi_{1}^{N}$ via Hodge theory} 
Let $X$ be a geometrically connected, smooth and proper variety over $k$, with a base point $x\in X(k)$. We apply the general construction from last section to the following cases and we fix some notations:
\begin{enumerate}
\item Let $\mathcal{C}$ be the category of coherent $\mathcal{O}_{X}$-modules, $\omega$ be the fiber functor defined by $x$, and $\mathcal{D}$ be the category of quasi-coherent $\mathcal{O}_{X}$-modules. We fix an iterated universal extension
\[\begin{tikzcd}
&{...}\ar[r] &(\mathcal{E}_{n},e_{n}) \ar[r] &{...} \ar[r] &(\mathcal{E}_{1},e_{1}). 
\end{tikzcd}
\]
For each $n\geq 2$, we have a morphism $\tau_{n}: \mathrm{H}^{1}(X,\mathcal{E}_{n}^{\vee})\hookrightarrow \mathrm{H}^{1}(X,\mathcal{E}_{n-1}^{\vee})\otimes\mathrm{H}^{1}(X)$ defined as \eqref{induc}. For $n\geq 1$ we have a subspace $T_{n}\hookrightarrow \mathrm{H}^{1}(X)^{\otimes n}$ defined in Lemma \ref{sub_hopf}, an inclusion $\zeta_{n}:\mathrm{H}^{1}(X,\mathcal{E}_{n}^{\vee})\hookrightarrow \mathrm{H}^{1}(X)^{\otimes n}$ defined as \eqref{induc'}, which factors through $T_{n}$ as $\xi_{n}:\mathrm{H}^{1}(X,\mathcal{E}_{n}^{\vee})\hookrightarrow T_{n}$ by Proposition \ref{mor_app}. We have a Hopf algebra $\mathcal{H}(\mathrm{H}^{\leq 2}(X))$.
\item Let $\mathcal{C}$ be the category of integrable connections on $X$, $\omega$the fiber functor defined by $x$, and $\mathcal{D}$ be the category of $\mathcal{D}_{X}$-modules. We fix an iterated universal extension
\[\begin{tikzcd}
&{...}\ar[r] &(\mathcal{F}_{n},f_{n}) \ar[r] &{...} \ar[r] &(\mathcal{F}_{1},f_{1}). 
\end{tikzcd}
\]
For each $n\geq 2$, we have a morphism $\tau_{dR,n}: \mathrm{H}^{1}_{dR}(X,\mathcal{F}_{n}^{\vee})\hookrightarrow \mathrm{H}^{1}_{dR}(X,\mathcal{F}_{n-1}^{\vee})\otimes\mathrm{H}^{1}_{dR}(X)$ defined as \eqref{induc}. For $n\geq 1$ we have a subspace $T_{dR,n}\hookrightarrow \mathrm{H}^{1}_{dR}(X)^{\otimes n}$ defined in Lemma \ref{sub_hopf}, an inclusion $\zeta_{dR,n}:\mathrm{H}^{1}_{dR}(X,\mathcal{F}_{n}^{\vee})\hookrightarrow \mathrm{H}^{1}_{dR}(X)^{\otimes n}$ defined as \eqref{induc'}, which factors through $T_{dR,n}$ as $\xi_{dR,n}:\mathrm{H}_{dR}^{1}(X,\mathcal{F}_{n}^{\vee})\hookrightarrow T_{dR,n}$ by \ref{mor_app}. We have a Hopf algebra $\mathcal{H}(\mathrm{H}_{dR}^{\leq 2}(X))$.
\end{enumerate} \par
Here is our main result concerning Nori's fundamental group of nilpotent bundles. 
\begin{thm}
\label{quadric}
Let $X$ be a geometrically connected, smooth and proper variety over $k$, with a base point $x\in X(k)$. The Hopf algebra $\mathcal{O}(\pi_{1}^{N}(X,x))$ is isomorphic to $\mathcal{H}(\mathrm{H}^{\leq 2}(X))$. 
\end{thm} 
We immediately reduce to the case of smooth projective complex varieties.
\begin{proof}[A reduction for Theorem \ref{quadric}]
The isomorphism in Theorem \ref{quadric} is given by the non-canonical $\xi'$ as in Corollary \ref{non_qua}, and we are forced to show that the resulting canonical $\xi_{n}$ are all isomorphisms. Our formulation is clearly compatible with extension of scalars, and therefore we can assume $k=\mathbb{C}$. We can find a proper birational morphism $\pi: \tilde{X}\rightarrow X$ with $\tilde{X}$ being smooth projective. Using $\mathrm{R}\pi_{*}\mathcal{O}_{\tilde{X}}\cong\mathcal{O}_{X}$, we can show that $\pi$ induces an isomorphism between Nori's fundamental groups. Replacing $X$ by $\tilde{X}$, we reduce to the case that $X$ is a smooth projective complex variety.
\end{proof}
Let $X$ be a smooth  projective complex variety, with $x\in X(\mathbb{C})$. We fix an ample line bundle $\mathcal{L}$ on $X$. The line bundle $\mathcal{L}$ defines a Kahler class on $X^{an}$.\par
\begin{thm}\label{case_deRham}
There exists a non-canonical isomorphism 
\[ \mathcal{O}(\pi_{1}^{dR}(X,x))\cong\mathcal{H}(\mathrm{H}_{dR}^{\leq 2}(X)).\]
In particular, for each $n\geq 1$, $\xi_{dR,n}$ is an isomorphim.
\end{thm}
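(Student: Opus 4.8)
The plan is to invoke the formality of compact Kähler manifolds and then translate it into the language of iterated universal extensions developed in section 2. By Corollary \ref{non_qua}, applied to the category $\mathcal{C}$ of integrable connections on $X$ with $\mathcal{D}$ the category of $\mathcal{D}_{X}$-modules, there is a non-canonical inclusion $\xi': \mathcal{O}(\pi_{1}^{dR}(X,x))\hookrightarrow\mathcal{H}(\mathrm{H}^{\leq 2}_{dR}(X))$, and it is an isomorphism precisely when each $\xi_{dR,n}:\mathrm{H}^{1}_{dR}(X,\mathcal{F}_{n}^{\vee})\hookrightarrow T_{dR,n}$ is an isomorphism. So the whole statement reduces to proving that all $\xi_{dR,n}$ are surjective. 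Equivalently, by the inductive description \eqref{induc'}\eqref{diagram}, one must show that the only obstruction to enlarging $\mathrm{H}^{1}_{dR}(X,\mathcal{F}_{n}^{\vee})$ inside $\mathrm{H}^{1}_{dR}(X)^{\otimes n}$ is the one coming from the cup product $\mathrm{H}^{1}_{dR}(X)\otimes\mathrm{H}^{1}_{dR}(X)\to\mathrm{H}^{2}_{dR}(X)$, i.e. the ``secondary'' obstructions in $\mathrm{H}^{2}_{dR}(X,\mathcal{F}_{n-1}^{\vee})$ coming from Massey-type products must vanish.

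The input that kills these higher obstructions is formality. Working over $k=\mathbb{C}$, Chen's theory (\cite{chen1977iterated}) identifies $\mathcal{O}(\pi_{1}^{dR}(X,x))$ with the $0$-th cohomology of the bar complex of the de Rham dga $A^{*}=\Omega^{*}(X)$ of smooth forms (or, algebraically, the algebraic de Rham dga); formality (\cite{deligne1975real}) gives a chain of quasi-isomorphisms of dgas between $A^{*}$ and its cohomology $H^{*}=\mathrm{H}^{*}_{dR}(X)$ equipped with zero differential. Since the bar construction is a quasi-isomorphism invariant, $\mathcal{O}(\pi_{1}^{dR}(X,x))\cong H^{0}(\mathrm{Bar}(H^{*}))$. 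For a dga with zero differential, the bar cohomology in degree $0$ is computed purely from the multiplication $H^{1}\otimes H^{1}\to H^{2}$ (the only component of the differential on the bar complex of a formal space that survives in the relevant range), and a direct identification shows $H^{0}(\mathrm{Bar}(H^{*}))$ is exactly $\mathcal{H}(\mathrm{H}^{\leq 2}_{dR}(X))$ as defined via Lemma \ref{sub_hopf} — the subspaces $T_{n}$ are precisely the degree-$n$ pieces of the bar cohomology cut out by the quadratic relations from the cup product. (One must check here that $J=\ker(\cup)$ indeed contains all symmetric tensors $v\otimes v$, which follows from graded-commutativity: $v\cup v=0$ in $\mathrm{H}^2$ for $v$ of odd degree, and $v\otimes v - $ its transpose is already in the shuffle ideal.) Matching the Hopf algebra structures on the two sides — unit, counit, shuffle product, deconcatenation coproduct — is then a formal comparison, since both arise from the standard bar-construction formulas.

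The main obstacle is the bookkeeping that matches the two descriptions of $T_{dR,n}$: on one side it is defined recursively in Lemma \ref{sub_hopf} as an iterated kernel, and on the other it emerges as a cohomology group of the reduced bar complex of $(H^{*},0)$. Making this identification precise requires tracking signs (note the sign $(-1)^i$ in Lemma \ref{cup} relating Yoneda and derived cup products, which must be reconciled with the sign conventions in the bar differential) and checking that the recursive kernel in Lemma \ref{sub_hopf}, applied with the last two tensor factors, agrees with the full bar differential, which a priori involves the product applied to every pair of adjacent factors. The key point making these compatible is that the diagram \eqref{diagram} already packages the ``interior'' cup products as automatically-satisfied consequences of the outermost one together with the inductive hypothesis, exactly as in the proof of Proposition \ref{mor_app}; so one shows by induction that the iterated-kernel description and the bar-cohomology description define the same subspace of $\mathrm{H}^{1}_{dR}(X)^{\otimes n}$. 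Once this dictionary is set up, formality does the rest: it guarantees there are no further differentials, hence $\xi_{dR,n}$ is surjective for every $n$, and $\xi'$ is the desired isomorphism.
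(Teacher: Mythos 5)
Your proposal follows essentially the same route as the paper's proof: Chen's bar-construction description of $\mathcal{O}(\pi_{1}^{dR}(X,x))$, formality of compact Kähler manifolds to replace $\Omega^{*}(X^{an})$ by $\mathrm{H}^{*}_{dR}(X)$, and the identification of the resulting bar Hopf algebra with $\mathcal{H}(\mathrm{H}^{\leq 2}_{dR}(X))$. The paper states this in three sentences with references, while you additionally spell out the (correct) matching of the recursive kernels $T_{dR,n}$ with degree-zero bar cohomology and the preliminary reduction via Corollary \ref{non_qua}; these are elaborations of, not deviations from, the same argument.
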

\begin{proof}
We can express $\mathcal{O}(\pi_{1}^{dR}(X,x))$ in terms of $\Omega^{*}(X^{an})$ (originally in \cite{chen1977iterated}, or 3.5 \cite{gil2017multiple}). By the formality of compact Kähler manifolds (originally in \cite{deligne1975real}, or Theorem 3.13 \cite{amoros1996fundamental}), we can replace $\Omega^{*}(X^{an})$ by $\mathrm{H}^{*}_{dR}(X)$ , and the resulting Hopf algebra is exactly $\mathcal{H}(\mathrm{H}_{dR}^{\leq 2}(X))$. Now the dimensions of $\mathcal{C}_{n}\mathcal{O}(\pi_{1}^{dR}(X,x))$ and $\mathcal{C}_{n}\mathcal{H}(\mathrm{H}_{dR}^{\leq 2}(X))$ are the same, and this forces $\xi_{n}$ be to isomorphisms for all $n\geq 1$. 
\end{proof}

\begin{rmk}[\cite{deligne1975real}]Let $X$ be a compact Kähler manifold.
\begin{enumerate}
\item  The pro-unipotent completion of $\pi_{1}^{top}(X^{an},x)$ over $\mathbb{R}$ is determined by 
\[\cup:\mathrm{H}^{1}_{dR}(X^{an},\mathbb{R})\otimes\mathrm{H}^{1}_{dR}(X^{an},\mathbb{R})\rightarrow \mathrm{H}^{2}_{dR}(X^{an},\mathbb{R}).\] Therefore it is sufficient to work over $\mathbb{R}$ rather than $\mathbb{C}$. 
\item We do not make an exhaustive use of $\mathrm{H}^{*}_{dR}(X^{an},\mathbb{R})$. For example, cohomology groups in degree $\geq 3$ are totally missing from our discussion. The full cohomology algebra  $\mathrm{H}^{*}_{dR}(X^{an},\mathbb{R})$ can be used to determine real homotopy groups. 
\end{enumerate}
\end{rmk}

\begin{noco}
\begin{enumerate}
\item Forgetting the connection map in $\mathcal{F}_{n}$ we obtain a nilpotent bundle of depth $\leq n$, denoted by $\underline{\mathcal{F}_{n}}$. By universal property of $\mathcal{E}_{n}$ we have a unique morphism $\mathcal{E}_{n}\rightarrow \underline{\mathcal{F}_{n}}$ sending $e_{n}$ to $f_{n}$, and taking dual we have
\[\beta_{n}: \underline{\mathcal{F}_{n}^{\vee}} \rightarrow \mathcal{E}_{n}^{\vee}. \]
\item
We write $\rho_{n}$ for the natural morphism
\[\mathrm{H}^{1}_{dR}(X,\mathcal{F}_{n}^{\vee})\rightarrow \mathrm{H}^{1}(X,\underline{\mathcal{F}^{\vee}_{n}})\]
coming from the Hodge-de Rham spectral sequence. 
\item
For a morphism between objects (which can be sheaves, or connections) $\alpha:\mathcal{E}\rightarrow \mathcal{F}$, we write \[\mathrm{H}^{*}(\alpha):\mathrm{H}^{*}(\mathcal{E})\rightarrow\mathrm{H}^{*}(\mathcal{F})\] for the induced morphisms between cohomology groups.
\end{enumerate}
\end{noco}
\begin{proof}[Proof of Theorem \ref{quadric}] 
Dualizing the defining exact sequence of $\mathcal{F}_{n}$, and forgetting all connection maps, we have a short exact sequence of nilpotent vector bundles
\begin{equation}\label{dn4}
\begin{tikzcd} &0 \ar[r] &\underline{\mathcal{F}_{n-1}^{\vee}}\ar[r] &\underline{\mathcal{F}_{n}^{\vee}} \ar[r] &\mathrm{H}_{dR}^{1}(X,\mathcal{F}_{n-1}^{\vee})\otimes\mathcal{O}_{X}\ar[r] &0,\end{tikzcd}
\end{equation}
and the connecting morphism from $\mathrm{H}^{0}$ to $\mathrm{H}^{1}$ coincides with $\rho_{n-1}$. The sequence \eqref{dn4} can be completed into a commutative diagram
\begin{equation}\label{dn5}
\begin{tikzcd} [row sep=large]
&0 \ar[r] &\underline{\mathcal{F}_{n-1}^{\vee}}\ar[r]\ar[d,"\beta_{n-1}"] &\underline{\mathcal{F}_{n}^{\vee}} \ar[d,"\beta_{n}"]\ar[r] &\mathrm{H}_{dR}^{1}(X,\mathcal{F}_{n-1}^{\vee})\otimes\mathcal{O}_{X}\ar[r]\ar[d,"g\otimes id"] &0 \\
&0 \ar[r] &\mathcal{E}_{n-1}^{\vee}\ar[r] &\mathcal{E}_{n}^{\vee} \ar[r] &\mathrm{H}^{1}(X,\mathcal{E}_{n-1}^{\vee})\otimes\mathcal{O}_{X}\ar[r] &0.
\end{tikzcd}
\end{equation}
To see what $g$ is, in \eqref{dn5} we consider connecting morphisms from $\mathrm{H}^{0}(X,\cdot)$ to $\mathrm{H}^{1}(X,\cdot)$, and there is a commutative diagram
\[\begin{tikzcd}
&\mathrm{H}^{1}_{dR}(X,\mathcal{F}_{n-1}^{\vee}) \ar[d,"g"]\ar[r,"\rho_{n-1}"] &\mathrm{H}^{1}(X,\underline{\mathcal{F}^{\vee}_{n-1}}) \ar[d,"\mathrm{H}^{1}(\beta_{n-1})"]\\
&\mathrm{H}^{1}(X,\mathcal{E}_{n-1}^{\vee}) \ar[r,equal]&\mathrm{H}^{1}(X,\mathcal{E}_{n-1}^{\vee}).
\end{tikzcd}
\]
Therefore we have $g=\mathrm{H}^{1}(\beta_{n-1})\circ\rho_{n-1}$. Now applying $\mathrm{H}^{1}(X,\cdot)$ to \eqref{dn5} we get a commutative square
\begin{equation}
\label{jiu}
\begin{tikzcd}[row sep=large]
&\mathrm{H}^{1}(X,\underline{\mathcal{F}_{n}^{\vee}}) \ar[r]\ar[d,"\mathrm{H}^{1}(\beta_{n})"] &\mathrm{H}_{dR}^{1}(X,\mathcal{F}_{n-1}^{\vee})\otimes\mathrm{H}^{1}(X)\ar[d,"\mathrm{H}^{1}(\beta_{n-1})\circ\rho_{n-1} \otimes id"] \\
&\mathrm{H}^{1}(X,\mathcal{E}_{n}^{\vee}) \ar[r,"\tau_{n}"] &\mathrm{H}^{1}(X,\mathcal{E}_{n-1}^{\vee})\otimes\mathrm{H}^{1}(X).
\end{tikzcd}
\end{equation}
We have the obvious commutative square
\begin{equation}
\label{shi}
\begin{tikzcd}[row sep=large]
&\mathrm{H}^{1}_{dR}(X,\mathcal{F}_{n}^{\vee})  \ar[r,"\tau_{dR,n}"]\ar[d,"\rho_{n}"] &\mathrm{H}_{dR}^{1}(X,\mathcal{F}_{n-1}^{\vee})\otimes\mathrm{H}_{dR}^{1}(X)\ar[d,"id\otimes\rho_{1}"] \\
&\mathrm{H}^{1}(X,\underline{\mathcal{F}_{n}^{\vee}}) \ar[r] &\mathrm{H}_{dR}^{1}(X,\mathcal{F}_{n-1}^{\vee})\otimes\mathrm{H}^{1}(X).
\end{tikzcd}
\end{equation}
Combining \eqref{shi}\eqref{jiu}, we obtain a commutative square
\[
\begin{tikzcd}
&\mathrm{H}^{1}_{dR}(X,\mathcal{F}_{n}^{\vee})  \ar[r,"\tau_{dR,n}"]\ar[d,"\mathrm{H}^{1}(\beta_{n})\circ\rho_{n}"] &\mathrm{H}_{dR}^{1}(X,\mathcal{F}_{n-1}^{\vee})\otimes\mathrm{H}_{dR}^{1}(X)\ar[d,"\mathrm{H}^{1}(\beta_{n-1})\circ\rho_{n-1}\otimes\rho_{1}"] \\
&\mathrm{H}^{1}(X,\mathcal{E}_{n}^{\vee}) \ar[r,"\tau_{n}"] &\mathrm{H}^{1}(X,\mathcal{E}_{n-1}^{\vee})\otimes\mathrm{H}^{1}(X),
\end{tikzcd}
\]
and therefore a commutative diagram
\begin{equation}\label{fin}
\begin{tikzcd}[row sep=large]
&\mathrm{H}^{1}_{dR}(X,\mathcal{F}_{n}^{\vee})  \ar[r,"\xi_{dR,n}",hook]\ar[d,"\mathrm{H}^{1}(\beta_{n})\circ\rho_{n}"] &T_{dR,n} \ar[r,hook]\ar[d,"\rho_{1}^{\otimes n}|_{T_{dR,n}}"] &\mathrm{H}^{1}_{dR}(X)^{\otimes n}\ar[d,"\rho_{1}^{\otimes n}"] \\
&\mathrm{H}^{1}(X,\mathcal{E}_{n}^{\vee})  \ar[r,"\xi_{n}",hook] &T_{n} \ar[r,hook] &\mathrm{H}^{1}(X)^{\otimes n}
\end{tikzcd}
\end{equation} by iterating.  Using the Kähler class associated to $\mathcal{L}$,  we find $\iota_{i}$
\[\begin{tikzcd}
&\mathrm{H}^{i}(X)\ar[r,"\iota_{i}"]\ar[rr,bend left=20,"id"] &\mathrm{H}^{i}_{dR}(X) \ar[r] &\mathrm{H}^{i}(X),
\end{tikzcd}
\] for $i=1,2$, lifting the canonical projection $\mathrm{H}^{i}_{dR}(X) \rightarrow\mathrm{H}^{i}(X)$ defined by the Hodge-de Rham spectral sequence. These liftings are compatible with cup products in the sense of the following commutative square
\[\begin{tikzcd}[row sep=large]
&\mathrm{H}^{1}(X)\otimes\mathrm{H}^{1}(X) \ar[r,"\cup"]\ar[d,"\iota_{1}\otimes\iota_{1}"] &\mathrm{H}^{2}(X) \ar[d,"\iota_{2}"] \\
&\mathrm{H}^{1}_{dR}(X)\otimes\mathrm{H}_{dR}^{1}(X) \ar[r,"\cup"]&\mathrm{H}^{2}_{dR}(X).
\end{tikzcd}
\]
This gives a lifting 
\[\begin{tikzcd}[column sep=large, row sep=large]
&T_{n} \ar[dr,equal] \ar[d,"\iota"] \\
&T_{dR,n} \ar[r,"\rho_{1}^{\otimes n}|_{T_{dR,n}}"] &T_{n},
\end{tikzcd}
\]
so the morphism 
 \[\rho_{1}^{\otimes n}|_{T_{dR,n}}: T_{dR,n}\rightarrow T_{n}\] 
is surjective. By Theorem \ref{case_deRham} we have $\xi_{dR,n}$ to be an isomorphism. From \eqref{fin} we see that $\xi_{n}$ is surjective and therefore an isomorphism. By Corollary \ref{non_qua} we are done.
\end{proof}
\section{Group cohomology of a pro-unipotent group scheme}
Let $(X,x)$ be a pointed smooth projective variety over a field of characteristic $0$. In this section we aim to determine low degree group cohomology of $\pi_{1}^{N}(X,x)$ and $\pi_{1}^{dR}(X,x)$:
\begin{thm}
\label{cohomology}
Let $(X,x)$ be a pointed geometrically connected smooth proper variety over a field $k$ of characteristic $0$. The low degree group cohomology of the trivial representation of $\pi_{1}^{N}(X,x)$ (resp. $\pi_{1}^{dR}(X,x)$) is given by 
\[\mathrm{H}^{i}=
\begin{cases}
k, &\text{ for } i=0, \\
\mathrm{H}^{1}(X)(\text{resp. }\mathrm{H}_{dR}^{1}(X)) &\text{ for } i=1, \\
\mathrm{Im} (\cup:\mathrm{H}^{1}(X)^{\otimes 2}\rightarrow \mathrm{H}^{2}(X))(\text{resp. }\mathrm{Im} (\cup:\mathrm{H}_{dR}^{1}(X)^{\otimes 2}\rightarrow \mathrm{H}_{dR}^{2}(X))) &\text{ for } i=2.\\
\end{cases}\]
\end{thm}
Bofore the proof we give some remarks and immediate corollaries. 
\begin{rmk}\label{relation}
It is clear that $\pi_{1}^{N}(X,x)$ (resp. $\pi_{1}^{dR}(X,x)$) can be realized as a quotient of a free pro-unipotent group scheme,
\[\mathcal{G}(V)\rightarrow  \pi_{1}^{N}(X,x),\]
for $V=\mathrm{H}^{1}(X)$ (resp. $V=\mathrm{H}^{1}_{dR}(X)$),  which is a so-called proper presentation (we recall Lemma \ref{free_uni} for a Hopf algebra $\mathcal{H}(V)$ and the pro-unipotent group scheme $\mathcal{G}(V)$ associated to it. We refer to Definition 3.10 \cite{LUBOTSKY198276} for the definition of a proper presentation). We use $\mathcal{G}'$ to denote the kernel of this quotient. The dimension of the second group cohomology is the minimal possible $n$, such that there are elements $g_{1},...,g_{n}$ of $\mathcal{G}'$, such that the abstract subgroup of $\mathcal{G}'$ generated by all $\mathcal{G}(V)$-conjugates of $g_{1},...,g_{n}$ is Zariski dense in $\mathcal{G}'$. We refer to Theorem 3.11 \cite{LUBOTSKY198276}. 
\end{rmk}
\begin{rmk}
In case $X$ is a smooth proper curve, $\pi_{1}^{N}(X,x)$ is a free pro-unipotent group scheme and the computation has been made in Example \ref{cohomology_free}. In case $X$ is an abelian variety and thus $\pi_{1}^{N}(X,x)$ is a commutative unipotent group scheme, the computation is also classical (see the lemma at Page 85 \cite{jantzen2003representations}). 
\end{rmk}
\begin{rmk}
Let $X$ be a smooth proper curve over $k$. Theorem \ref{cohomology} for the de Rham fundamental group $\pi_{1}^{dR}(X,x)$ can be understood as a de Rham $K(\pi,1)$-property for $X$. See the last section of \cite{BAO2025103646}.
\end{rmk}

\begin{proof}[Proof of Theorem \ref{cohomology}]
The cohomology in degree $0$ and $1$ should be clear. In degree $2$, we prove for $\pi_{1}^{N}(X,x)$ and the de Rham case is similar. We use the same notation as that fixed at the beginning of Section 3. We have $\mathcal{E}_{0}\cong \mathcal{O}_{X}$, and $
\mathcal{E}_{0,x}^{\vee}$ underlines the trivial representation of $\pi_{1}^{N}(X,x)$. The notations $\mathcal{E}_{0}$ and $\mathcal{O}_{X}$ are used interchangeably. We have a short exact sequence of $\pi_{1}^{N}(X,x)$-representations:
\[\begin{tikzcd}
& 0\ar[r]&\mathcal{E}^{\vee}_{0,x} \ar[r] &\varinjlim_{n} \mathcal{E}^{\vee}_{n,x} \ar[r] &\varinjlim_{n} \mathcal{E}^{\vee}_{n,x}/\mathcal{E}^{\vee}_{0,x} \ar[r] &0. 
\end{tikzcd}\]
Note $\varinjlim_{n} \mathcal{E}^{\vee}_{n,x}$ is the Hopf algebra of $\pi_{1}^{N}(X,x)$, regarded as the regular representation, and thus the group cohomology vanishes in degree $\geq 1$. Taking the long exact sequence of cohomology we have
\[\begin{split}\mathrm{H}^{2}(\pi_{1}^{N}(X,x),\mathcal{E}_{0,x}^{\vee})&\cong\mathrm{H}^{1}(\pi_{1}^{N}(X,x),\varinjlim_{n} \mathcal{E}^{\vee}_{n,x}/\mathcal{E}^{\vee}_{0,x})\\
&\cong\varinjlim_{n}\mathrm{H}^{1}(\pi_{1}^{N}(X,x),\mathcal{E}^{\vee}_{n,x}/\mathcal{E}^{\vee}_{0,x}) \\
&\cong \varinjlim_{n}\mathrm{H}^{1}(X,\mathcal{E}_{n}^{\vee}/\mathcal{E}_{0}^{\vee}).
\end{split}\]
For each $n\geq 1$ the morphism $\mathrm{H}^{1}(X,\mathcal{E}_{n}^{\vee}/\mathcal{E}_{0}^{\vee})\rightarrow \mathrm{H}^{1}(X,\mathcal{E}_{n+1}^{\vee}/\mathcal{E}_{0}^{\vee})$ is induced by the short exact sequence
\[\begin{tikzcd}
&0 \ar[r] &\mathcal{E}_{n}^{\vee}/\mathcal{E}_{0}^{\vee} \ar[r] &\mathcal{E}_{n+1}^{\vee}/\mathcal{E}_{0}^{\vee} \ar[r] &\mathrm{H}^{1}(X,\mathcal{E}_{n}^{\vee})\otimes \mathcal{O}_{X}\ar[r] &0\
\end{tikzcd}\]
and we claim it is surjective. From the same short exact sequence we see the surjectivity is equivalent to the injecticity of the composition
\[\begin{tikzcd}
&\mathrm{H}^{1}(X,\mathcal{E}_{n}^{\vee})\otimes\mathrm{H}^{1}(X,\mathcal{E}_{0}^{\vee}) \ar[r,"\cup"] &\mathrm{H}^{2}(X,\mathcal{E}_{n}^{\vee}) \ar[r] &\mathrm{H}^{2}(X,\mathcal{E}_{n}^{\vee}/\mathcal{E}_{0}^{\vee}).
\end{tikzcd}\]
It suffices to prove that the images of the cup product $\cup: \mathrm{H}^{1}(X,\mathcal{E}_{n}^{\vee})\otimes \mathrm{H}^{1}(X,\mathcal{O}_{X})\rightarrow \mathrm{H}^{2}(X,\mathcal{E}_{n}^{\vee})$ and the morphism $\mathrm{H}^{2}(X,\mathcal{E}_{0}^{\vee})\rightarrow \mathrm{H}^{2}(X,\mathcal{E}_{n}^{\vee})$ have intersection $0$. However we have a stronger fact (Corollary \ref{null_inter}) saying that the images of the cup product $\cup: \mathrm{H}^{1}(X,\mathcal{E}_{n}^{\vee})\otimes \mathrm{H}^{1}(X,\mathcal{O}_{X})\rightarrow \mathrm{H}^{2}(X,\mathcal{E}_{n}^{\vee})$ and the morphism $\mathrm{H}^{2}(X,\mathcal{E}_{n-1}^{\vee})\rightarrow \mathrm{H}^{2}(X,\mathcal{E}_{n}^{\vee})$ have intersection $0$, and the claim follows from the observation that the morphism  $\mathrm{H}^{2}(X,\mathcal{E}_{0}^{\vee})\rightarrow \mathrm{H}^{2}(X,\mathcal{E}_{n}^{\vee})$ factors through $\mathrm{H}^{2}(X,\mathcal{E}_{n-1}^{\vee})$.  \par
Now we have a surjection \[\mathrm{H}^{1}(\pi_{1}^{N}(X,x),\mathcal{E}_{1,x}^{\vee}/\mathcal{E}^{\vee}_{0,x})\twoheadrightarrow\varinjlim_{n}\mathrm{H}^{1}(\pi_{1}^{N}(X,x),\mathcal{E}_{n,x}^{\vee}/\mathcal{E}^{\vee}_{0,x})\cong \mathrm{H}^{2}(\pi_{1}^{N}(X,x),\mathcal{E}_{0,x}^{\vee}).\] Writing $\mathrm{H}^{1}(\pi_{1}^{N}(X,x),\mathcal{E}_{1,x}^{\vee}/\mathcal{E}^{\vee}_{0,x})$ as $\mathrm{H}^{1}(\pi_{1}^{N}(X,x),\mathcal{E}^{\vee}_{0,x})^{\otimes 2}$, we deduce the surjectivity of the cup product of group cohomology:
\[\cup: \mathrm{H}^{1}(\pi_{1}^{N}(X,x),\mathcal{E}^{\vee}_{0,x})^{\otimes 2} \twoheadrightarrow \mathrm{H}^{2}(\pi_{1}^{N}(X,x),\mathcal{E}^{\vee}_{0,x}),\]
and we conclude from the following comutative diagram:
\[\begin{tikzcd}
&\mathrm{H}^{1}(\pi_{1}^{N}(X,x),\mathcal{E}^{\vee}_{0,x})^{\otimes 2} \ar[r,"\cup",twoheadrightarrow]\ar[d,equal] &\mathrm{H}^{2}(\pi_{1}^{N}(X,x),\mathcal{E}^{\vee}_{0,x}) \ar[d,hook] \\
&\mathrm{H}^{1}(X,\mathcal{E}^{\vee}_{0})^{\otimes 2} \ar[r,"\cup"] &\mathrm{H}^{2}(X,\mathcal{E}^{\vee}_{0}). 
\end{tikzcd}
\]
We explain the the injectivity of the right vertical map from the commutative square above. This follows from the same argument of Example \ref{cohomology_free} and thus that of \cite{BAO2025103646}. The argument of Proposition  2.2  \cite{esnault2006gauss} also leads to a proof of this fact.
\end{proof}
 \bibliographystyle{plain}
 \bibliography{uni_nori}
\end{document}